\newcommand{\va}{{\mathbf{a}}}
\newcommand{\vc}{{\mathbf{c}}}
\newcommand{\vd}{{\mathbf{d}}}
\newcommand{\vu}{{\mathbf{u}}}
\newcommand{\vv}{{\mathbf{v}}}
\newcommand{\vx}{{\mathbf{x}}}
\newcommand{\vy}{{\mathbf{y}}}
\newcommand{\vz}{{\mathbf{z}}}
\newcommand{\vC}{{\mathbf{C}}}
\newcommand{\cA}{{\mathcal{A}}}
\newcommand{\cB}{{\mathcal{B}}}
\newcommand{\cL}{{\mathcal{L}}}
\newcommand{\cN}{{\mathcal{N}}}
\newcommand{\cT}{{\mathcal{T}}}
\newcommand{\cX}{{\mathcal{X}}}
\newcommand{\vareps}{\varepsilon}
\newcommand{\EE}{\mathbb{E}} 
\newcommand{\RR}{\mathbb{R}} 
\newcommand{\vzero}{\mathbf{0}} 
\newcommand{\vone}{{\mathbf{1}}} 
\newcommand{\dist}{\mathrm{dist}}    
\newcommand{\prox}{{\mathbf{prox}}} 
\newcommand{\dom}{{\mathrm{dom}}} 
\newcommand{\st}{\mbox{ s.t. }}
\DeclareMathOperator*{\argmin}{arg\,min} 
\newcommand{\bc}{\begin{center}}
\newcommand{\ec}{\end{center}}
\newcommand{\bdm}{\begin{displaymath}}
\newcommand{\edm}{\end{displaymath}}
\newcommand{\beq}{\begin{equation}}
\newcommand{\eeq}{\end{equation}}
\newcommand{\bfl}{\begin{flushleft}}
\newcommand{\efl}{\end{flushleft}}
\newcommand{\bt}{\begin{tabbing}}
\newcommand{\et}{\end{tabbing}}
\newcommand{\beqn}{\begin{eqnarray}}
\newcommand{\eeqn}{\end{eqnarray}}
\newcommand{\beqs}{\begin{align*}} 
\newcommand{\eeqs}{\end{align*}}  
\newtheorem{assumption}{Assumption}
\begin{document}

\title{Stochastic Inexact Augmented Lagrangian Method for Nonconvex Expectation Constrained Optimization}

\author{Zichong Li \and
Pin-Yu Chen \and Sijia Liu \and Songtao Lu \and Yangyang Xu}

\date{\today}

\institute{Z. Li \and Y. Xu \at
		Department of Mathematical Sciences, Rensselaer Polytechnic Institute, Troy, NY, 12180\\
		\email{\{liz19, xuy21\}@rpi.edu}
\and PY. Chen \and S. Lu \at
IBM Research, Thomas J. Watson Research Center, Yorktown Heights, NY 10598\\
\email{\{pin-yu.chen, songtao\}@ibm.com} 
\and S. Liu \at
Department of Computer Science and Engineering, Michigan State University, East Lansing, MI 48824\\
\email{liusiji5@msu.edu}		
}

\maketitle

\begin{abstract}
Many real-world problems not only have complicated nonconvex functional constraints but also use a large number of data points. This motivates the design of efficient stochastic methods on finite-sum or expectation constrained problems. In this paper, we design and analyze stochastic inexact augmented Lagrangian methods (Stoc-iALM) to solve problems involving a nonconvex composite (i.e. smooth+nonsmooth) objective and nonconvex smooth functional constraints.  
We adopt the standard iALM framework and design a subroutine  
by using the momentum-based variance-reduced proximal stochastic gradient method (PStorm) and a postprocessing step. Under certain regularity conditions (assumed also in existing works), to reach an $\varepsilon$-KKT point in expectation, we establish an oracle complexity result of  $O(\varepsilon^{-5})$,  which is better than the best-known $O(\varepsilon^{-6})$ result.  
Numerical experiments on the fairness constrained problem and the Neyman-Pearson classification problem with real data demonstrate that our proposed method outperforms an existing method with the previously best-known complexity result. 
\end{abstract}


\section{Introduction}
	In the big-data era, many real-world applications are dealing with an extremely large amount of data. Many such applications involve nonconvex functional constraints. To compute solutions of these problems, using all data for each update (e.g., in a deterministic method) is prohibitively expensive. This motivates us to design stochastic methods to efficiently compute the solutions.  

	In this paper, we consider the nonconvex expectation constrained problem:
	\begin{equation}\label{eq:ncp-stoc} 
	\begin{aligned}
		&f_0^*:=\min_{\vx\in\RR^d} \big\{f_0(\vx) := g(\vx)+h(\vx), \st \vc(\vx) = \vzero \big\},\\
		&\text{with }~g(\vx) = \EE_{\xi}[G_0(\vx;\xi)],\quad \vc(\vx) = \EE_{\xi}[\vC(\vx;\xi)]\in\RR^m,
	\end{aligned}	
	\end{equation}
	where $h$ is closed convex but possibly nonsmooth, and $\EE_{\xi}$ denotes the expectation taken over the random variable $\xi$.  
Notice that it does not lose generality to use the same random variable $\xi$ in the objective and constraints, because if they depend on two different random variables, we can represent $\xi$ as the stack of the two random variables. We assume that $g(\cdot)$ and $\vc(\cdot)$ are smooth (i.e., the gradient of $g$ and the Jacobian matrix of $\vc$ are Lipschitz continuous) but possibly nonconvex.  
When $\xi$ follows the uniform distribution on $\{1,2,\ldots, N\}$, the problem \eqref{eq:ncp-stoc} reduces to a finite-sum structured problem:
\begin{equation}\label{eq:ncp-finite} 
	\begin{aligned}
		&f_0^*:=\min_{\vx\in\RR^d} \big\{f_0(\vx) := g(\vx)+h(\vx), \st \vc(\vx) = \vzero \big\},\\
		&\text{with }~g(\vx) = \frac{1}{N}\sum_{\xi=1}^N G_0(\vx;\xi),\quad \vc(\vx) = \frac{1}{N}\sum_{\xi=1}^N \vC(\vx;\xi)\in\RR^m,
	\end{aligned}	
	\end{equation}	
which arises from applications involving a large amount of pre-collected data.
	
	Though only equality constraints are included, the formulation \eqref{eq:ncp-stoc} is general enough. As shown in \cite{li2021rate}, 
	an inequality constraint $t(\vx)\le 0$ can be equivalently formulated as an equality constraint $t(\vx)+s=0$ by enforcing the nonnegativity of $s$, and the Karush-Kuhn-Tucker (KKT) conditions of the reformulation are equivalent to those of the original one. Also, a simple convex constraint set $\cX$ can be included in \eqref{eq:ncp-stoc} by setting (part of) $h$ to 
	the indicator function $\vone_{\cX}(\vx)=0$ if $\vx \in \cX$ and $+\infty$ otherwise. 
	Many applications can be formulated to \eqref{eq:ncp-stoc},  such as Neyman-Pearson classification \cite{neyman1933ix, rigollet2011neyman} and the fairness constrained problem \cite{ma2019proximally}.

	Due to the presence of nonconvexity and stochasticity in both objective and constraints, solving \eqref{eq:ncp-stoc} is very challenging. Only a few works (e.g., \cite{ma2019proximally, boob2022stochastic}) have proposed and analyzed methods to solve such a problem. However, no existing methods have fully exploited the structure of \eqref{eq:ncp-stoc}. 
 We will present a stochastic method for \eqref{eq:ncp-stoc} under the general expectation setting, and establish its oracle complexity result, where the oracle can return the function value and gradient of $G_0$ and $\vC$ at any point $\vx$ and a sample of $\xi$. We follow the iALM framework and adopt the momentum-based variance-reduced proximal stochastic gradient method (PStorm) \cite{xu2022momentum} to design a subroutine.

\subsection{Contributions}
Our contributions are two-fold. First, we propose novel stochastic gradient-type methods, based on the framework of the inexact augmented Lagrangian method (Stoc-iALM), for solving nonconvex composite optimization problems with nonlinear nonconvex (but smooth) expectation constraints, in the form of \eqref{eq:ncp-stoc}. By exploiting the so-called mean-squared smoothness structure, we apply PStorm \cite{xu2022momentum} together with a proposed postprocessing step to design a subroutine within the framework of Stoc-iALM.  
The subroutine design is crucial to yield our complexity result that is better than existing best-known results and for good numerical performance, as its complexity has low-order dependence not only on a target error tolerance but also on other quantities such as the smoothness constant, variance bound, and initial objective gap.

Second, we conduct complexity analysis on the proposed Stoc-iALM with the designed subroutine.  
Under a regularity condition (that was also assumed in many existing works \cite{li2021rate, li2022zeroth, lin2022complexity, sahin2019inexact}), we obtain  
an $O(\vareps^{-5})$ oracle complexity result for the expectation-constrained problem \eqref{eq:ncp-stoc}. 
Our $O(\vareps^{-5})$ result yields a substantial improvement over the best-known $\tilde{O}(\vareps^{-6})$ and $O(\vareps^{-6})$ complexity results\footnote{In this paper, we use $\tilde{O}$ to suppress all logarithmic terms of $\vareps$ from the big-$O$ notation.} of the proximal-point methods in \cite{ma2019proximally} and \cite{boob2022stochastic}, which iteratively perturb both the objective and constraints and solve a perturbed convex constrained subproblem.

\subsection{Related Works}
In this subsection, we discuss related works on the inexact augmented Lagrangian method (iALM) and other first-order methods (FOMs) on functional constrained optimization.

The iALM has been popularly used for solving constrained problems. It alternatingly updates the primal variable by approximately minimizing the augmented Lagrangian function and the Lagrangian multiplier (also called dual variable) by dual gradient ascent \cite{hestenes1969multiplier, rockafellar1973dual}. For deterministic convex linear and/or nonlinear constrained problems, the iALM-based FOM in \cite{doi:10.1287/ijoo.2021.0052, lan2016iteration-alm} and the proximal-iALM-based one in \cite{li2021inexact} obtain an $\vareps$-KKT point with $O\left( \vareps^{-1} \log\frac{1}{\vareps} \right)$ gradient evaluations, and the AL-based FOMs in \cite{xu2021iteration, xu2021first, ouyang2015accelerated, li2021inexact, nedelcu2014computational} obtain an $\vareps$-optimal solution with $O(\vareps^{-1})$ gradient evaluations. For strongly-convex problems, the results are reduced to $O\left( \vareps^{-0.5} \log\frac{1}{\vareps} \right)$ and $O(\vareps^{-0.5})$ respectively, e.g., in \cite{doi:10.1287/ijoo.2021.0052, li2021inexact, xu2021iteration, nedelcu2014computational, necoara2014rate}. For deterministic nonconvex problems with nonlinear convex constraints, when Slater's condition holds, $\tilde O(\vareps^{-\frac{5}{2}})$ complexity results are obtained by the AL or penalty based FOMs in \cite{doi:10.1287/ijoo.2021.0052, lin2022complexity} and the proximal ALM-based FOM in \cite{meloiteration2020}. If the constraints are polyhedral and the objective is smooth, the complexity can be reduced to $O(\vareps^{-2})$ with a hidden constant dependent on the so-called Hoffman's bound of the polyhedral set \cite{zhang2022global}. Different from Slater's condition, a regularity condition is assumed in \cite{li2021inexact}, which obtains an $\tilde O(\vareps^{-\frac{5}{2}})$ result by an iALM-based FOM. The regularity condition is used to guarantee near feasibility from near stationarity of the AL function. Assuming a similar regularity condition, \cite{li2021inexact} and \cite{lin2022complexity} both achieve $\tilde O(\vareps^{-3})$ results for deterministic problems with nonconvex constraints, by an iALM based FOM and a proximal-point penalty based FOM respectively.  

There are many papers studying FOMs on convex stochastic constrained problems (e.g., \cite{lan2020algorithms, yan2022adaptive, xu2020primal}). Also, a few papers (e.g., \cite{wang2017penalty, shi2022momentum, jin2022stochastic}) have studied FOMs for nonconvex optimization with stochastic objective but deterministic constraints, either based on an exact-penalty framework or ALM. However, few papers have studied FOMs for the nonconvex expectation constrained problems. On solving inequality expectation constrained nonconvex optimization, both \cite{ma2019proximally} and \cite{boob2022stochastic} design stochastic first-order methods in the framework of the proximal-point (PP) method. They achieve $\tilde{O}(\vareps^{-6})$ and $O(\vareps^{-6})$ complexity results respectively, which are higher than our $O(\vareps^{-5})$ result. Both PP-based methods in \cite{ma2019proximally, boob2022stochastic} iteratively perturb the nonconvex objective and constraint functions to be strongly convex and inexactly solve the constrained convex subproblems. To achieve their results, the PP-based method in \cite{ma2019proximally} uses the online stochastic subgradient subroutine in \cite{yu2017online}, while the one in \cite{boob2022stochastic} designs a constraint extrapolation (ConEx) subroutine.  
Note that nonconvex structures that we assume are different from those in \cite{boob2022stochastic} and \cite{ma2019proximally}. While we assume a nonconvex composite objective and smooth constraints, the method in \cite{boob2022stochastic} applies to nonconvex problems where both the objective and constraint functions can be nonconvex composite, and \cite{ma2019proximally} only assumes weak convexity\footnote{A function $f$ is $\rho$-weakly convex for some $\rho>0$, if $f(\cdot) + \frac{\rho}{2}\|\cdot\|^2$ is convex} on the objective and constraint functions. However, even with the nonconvex structures that we assume, the methods in \cite{ma2019proximally} and \cite{boob2022stochastic} can still only achieve the $\tilde{O}(\vareps^{-6})$ and $O(\vareps^{-6})$ complexity results, as they do not exploit the smoothness structure in their subroutines.

Stochastic FOMs have also been proposed for minimax problems (e.g., \cite{tran2020hybrid, luo2020stochastic, huang2022accelerated}). The work \cite{tran2020hybrid} gives a hybrid variance-reduced stochastic gradient method for nonconvex-linear minimax problems with a compact domain of dual variables and establishes an $O(\vareps^{-5})$ complexity result to find an $\vareps$-stationary point. Although a nonlinear-constrained problem can be formulated as a nonconvex-linear minimax problem by the ordinary Lagrangian function, KKT conditions of the former are stronger than stationarity conditions of the latter that assumes a compact dual domain. This is due to the fact that the stationary point of a nonconvex-concave minimax problem with a compact dual domain may not be primal feasible. Both of \cite{luo2020stochastic, huang2022accelerated} assume strong concavity on the dual side. Let $\kappa$ be the condition number of the dual part. The method in \cite{luo2020stochastic} needs $O(\kappa^3\vareps^{-3})$ sample complexity to produce an $\vareps$-stationary solution, while the complexity result in  \cite{huang2022accelerated} is $\tilde O(\kappa^{\frac{9}{2}}\vareps^{-3})$. In order to obtain an $\vareps$-KKT point of the problem \eqref{eq:ncp-stoc} that we consider, under the regularity condition in Assumption~\ref{assump:reg-stoc} below, we can apply the methods in \cite{luo2020stochastic, huang2022accelerated} to a penalized problem $\min_\vx \big\{f_0(\vx) + \frac{\rho}{2}\|\vc(\vx)\|^2\big\}$ with $\rho = \Theta(\vareps^{-1})$, which is equivalent to the nonconvex strongly-concave minimax problem $\min_\vx\max_\vy \big\{f_0(\vx) + \vy^\top \vc(\vx) - \frac{1}{2\rho}\|\vy\|^2\big\}$. The resulting complexity results will be $O(\vareps^{-6})$ by the method in \cite{luo2020stochastic} and $O(\vareps^{-\frac{15}{2}})$ by the method in \cite{huang2022accelerated}, as the condition number of the equivalent minimax problem is $\Theta(\vareps^{-1})$. 

\subsection{Notations}\label{sec:notation-stoc}
We use $\Vert \cdot \Vert$ for the Euclidean norm of a vector and the spectral norm of a matrix. The notation $[n]$ denotes the set $\{1,\ldots,n\}$. For any $a\in\RR$, $[a]_{1+}:=\max\{a, 1\}$. The natural logarithmic function is $\ln(\cdot)$, and $e=2.71828...$ represents its base. We denote $J_\vc(\vx)$ as the Jacobian matrix of $\vc$ at $\vx$ and $J_\vC(\vx;\xi)$ the Jacobian matrix of $\vC(\,\cdot\,; \xi)$ at $\vx$. The distance between a vector $\vx$ and a set $\cX$ is denoted as $\dist(\vx, \cX) = \min_{\vy \in \cX} \Vert \vx-\vy \Vert$. The proximal operator of a convex function $r$ is defined as $\prox_r(\vx) := \argmin_{\vu}\{r(\vu) + \frac{1}{2}\Vert \vu-\vx \Vert^2 \}$. $\EE_{\xi_1,\xi_2}$ takes expectation about $\xi_1$ and $\xi_2$, and we always assume that $\xi_1$ and $\xi_2$ are independent and follow the same distribution as $\xi$ in \eqref{eq:ncp-stoc}. We use $\partial f$ to denote the subdifferetial of a function $f$.
	The augmented Lagrangian (AL) function of \eqref{eq:ncp-stoc} is
	\begin{equation}\label{eq:AL-stoc}
		\textstyle \cL_{\beta}(\vx,\vy) = f_0(\vx) + \vy^\top \vc(\vx) + \frac{\beta}{2} \Vert \vc(\vx) \Vert^2,
	\end{equation} 
	where $\beta > 0$ is the penalty parameter, and $\vy \in \RR^m$ is the multiplier or the dual variable. 
	
	\begin{definition}[$\vareps$-KKT point in expectation]\label{def:eps-kkt-stoc}
		Given $\vareps \geq 0$, a point $\vx \in \RR^d$ is called an $\vareps$-KKT point in expectation to \eqref{eq:ncp-stoc} if there is a vector $\vy\in\RR^m$ such that 
		\begin{equation*}
			\EE\big[\Vert \vc(\vx) \Vert^2\big] \leq \vareps^2,\quad
			\EE\left[\dist\left(\vzero, \partial f_0(\vx)+J_\vc^\top(\vx) \  \vy \right)^2\right] \leq \vareps^2.
		\end{equation*}
	\end{definition}
	
\section{Stochastic iALM and its outer iteration complexity}\label{sec:stoc-outer-conv}
To efficiently find a near KKT-point of \eqref{eq:ncp-stoc}, 
we design a stochastic gradient-type method based on the framework of the stochastic inexact augmented Lagrangian method (Stoc-iALM), which is given in Algorithm \ref{alg:ialm-stoc}. Because of nonconvexity, we can only produce a near-stationary point of each subproblem, as required in \eqref{eq:ialm-x-stoc}. Though the condition in \eqref{eq:ialm-x-stoc} is not checkable (due to taking expectation), it can be guaranteed from the convergence rate result of the subroutine that we will give in Section \ref{sec:ialm+pstorm-conv}. The update to the multiplier is inspired by \cite{sahin2019inexact, li2021rate} and adapts to the estimated primal infeasibility. With an appropriate choice of $\gamma_k$, we can ensure $\frac{\|\vy^k\|}{\beta_k} \to 0$, which is crucial in our analysis.

	\begin{algorithm}[h] 
		\caption{Stochastic inexact augmented Lagrangian method (Stoc-iALM) for solving \eqref{eq:ncp-stoc}}\label{alg:ialm-stoc}
		\DontPrintSemicolon
		\textbf{Initialization:} given $\vareps>0$, set $\vy^0 = \vzero$ and choose $\vx^0\in\dom(f_0)$, $\beta_0>0$, $\sigma>1$, and an integer sequence $\{M_k\}$\; 
		\For{$k=0,1,\ldots,$}{
			Let $\beta_k=\beta_0\sigma^k$. \;
			Obtain $\vx^{k+1}$ (by a subroutine) satisfying 
			\begin{equation}\label{eq:ialm-x-stoc}
				\EE \left[\dist(\vzero, \partial_x \cL_{\beta_k}(\vx^{k+1},\vy^k))^2\,\big|\, \vy^k\right] \le \vareps^2.
			\end{equation}
			
			Obtain i.i.d samples $\{\xi^k_i\}_{i=1}^{M_k}$ and set $\tilde{\vc}(\vx^{k+1}) = \frac{1}{M_k} \sum_{i=1}^{M_k} \vC(\vx^{k+1},\xi^k_i)$.\;
			Update $\vy$ by
			\begin{align}
				\vy^{k+1} = ~\vy^k  + \min\left\{\beta_k,\ \frac{\gamma_k }{\Vert \tilde{\vc}(\vx^{k+1}) \Vert}\right\} \tilde{\vc}(\vx^{k+1}).\label{eq:alm-y-stoc}
			\end{align}
		}
	\end{algorithm}	
	
	Without specifying a subroutine to obtain $\vx^{k+1}$, we first establish the outer iteration complexity result of Algorithm~\ref{alg:ialm-stoc}, by following the analysis in \cite{li2021rate, lin2022complexity}. 
	Throughout this paper, we make the following assumptions about \eqref{eq:ncp-stoc}.

		\begin{assumption}[stochastic first-order oracle]\label{assump:oracle}
	For the problem \eqref{eq:ncp-stoc}, a stochastic first-order oracle can be accessed. At any $\vx\in\dom(h)$, the oracle can obtain a sample $\xi$ and return $(\nabla G_0(\vx, \xi), \vC(\vx, \xi), J_\vC(\vx, \xi))$. 
	\end{assumption}
	
\begin{remark}
The overall complexity result of our algorithm will be measured by the total number of stochastic first-order oracles that are called. Though the oracle can return a tuple $(\nabla G_0(\vx, \xi), \vC(\vx, \xi), J_\vC(\vx, \xi))$, our algorithm may only use part of it during one update. However, even if part of an oracle is used, one oracle will be counted in measuring the complexity result.
\end{remark}	
	
	\begin{assumption}[structured bounded domain]\label{assump:comp-stoc}
		The domain of $h$, denoted as $\cX:=\dom(h)$, is compact.
		Moreover, for some $M > 0$, it holds that $\partial h(\vx) \subseteq \cN_{\cX}(\vx) + \cB_M, \forall \vx \in \cX$, where $\cN_{\cX}(\vx)$ denotes the normal cone of $\cX$ at $\vx$, and $\cB_M$ denotes a closed ball of radius $M$ centering at the origin.
	\end{assumption}
\begin{remark}
Assumption \ref{assump:comp-stoc} holds for rather general choices of $h(\cdot)$. For example, it holds for any $h(\cdot) := r(\cdot) + \vone_{\cX}(\cdot)$ as long as $\partial r$ is bounded everywhere (e.g., the $\ell_p$-norm for $p\ge1$), where $\vone_{\cX}$ denotes the indicator function on $\cX$. Under Assumption~\ref{assump:comp-stoc}, there must exist finite constants $B_0$ and $B_c$ such that	
\begin{subequations}\label{P3-eqn-33-stoc}
		\begin{align}
			&B_0 \ge \max_{\vx\in\dom(h)}\max \big\{|f_0(\vx)|,\left\Vert \nabla g(\vx) \right\Vert\big\}, \label{P3-eqn-33-stoc-a}\\
			&B_c \ge \max_{\vx\in\dom(h)}\Vert J_\vc(\vx) \Vert. \label{P3-eqn-33-stoc-c}
		\end{align}
	\end{subequations}
\end{remark}

	Due to nonconvexity of the constraints 
	in \eqref{eq:ncp-stoc}, one may not even find a near-feasible point in polynomial time. Therefore, following \cite{li2021rate, li2022zeroth, lin2022complexity, sahin2019inexact}, we assume a regularity condition on the constraints in \eqref{eq:ncp-stoc}, which ensures that a near-stationary point of the AL function is near feasible to \eqref{eq:ncp-stoc}, if the penalty parameter is big. Note that knowledge of $v$ below is not required in Algorithm~\ref{alg:ialm-stoc}.
	\begin{assumption}[regularity condition] \label{assump:reg-stoc} 
		There is a constant $v > 0$ such that for any $\vx \in \cX=\dom(h)$,
		\begin{equation}\label{eq:regularity-stoc}
			\textstyle v\Vert \vc(\vx) \Vert \le \dist \left(-J_\vc(\vx)^{\top} \vc(\vx), \cN_{\cX}(\vx) \right).
		\end{equation}
	\end{assumption} 
	
\begin{remark}
Here, we give a couple of remarks about the regularity condition. First, this regularity condition has been proven for many applications. For example, \cite{li2021rate} shows that it holds for all affine-equality constrained problems with possibly additional polyhedral or ball constraint sets. Other examples are given in \cite{lin2022complexity, sahin2019inexact}. 
Second, to find a near KKT point of a nonconvex expectation constrained problem, the two existing works \cite{ma2020quadratically} and \cite{boob2022stochastic} 
also need a certain regularity condition. 
Different from Assumption~\ref{assump:reg-stoc}, a uniform Slater's condition is assumed in \cite{ma2020quadratically}, and a strong MFCQ condition is assumed in \cite{boob2022stochastic}. Those conditions are neither strictly stronger nor strictly weaker than Assumption \ref{assump:reg-stoc}, as shown in \cite{lin2022complexity}.
\end{remark}

The next lemma will be used to upper bound $\frac{\|\vy^k\|}{\beta_k}$.

\begin{lemma}\label{lem:bd-vy-beta}
For any constants $\alpha >1$ and $\sigma >1$, if $\alpha \ge \frac{8}{\ln \sigma}$ and $\ln\sigma \ge \frac{8}{e^4}$, then it holds $\alpha\ge \log_\sigma \alpha^2$. In addition, for any $x \ge \log_\sigma\alpha^2$, it holds $\frac{\sigma^x}{x}\ge \alpha$.
\end{lemma}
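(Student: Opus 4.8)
The plan is to treat the two claims separately: I would first establish the inequality $\alpha \ge \log_\sigma\alpha^2$ by a one-variable calculus argument, and then bootstrap it to obtain the second claim. The key preliminary observation is that $\log_\sigma\alpha^2 = \frac{2\ln\alpha}{\ln\sigma}$, which converts both claims into statements about elementary functions of a single variable.

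For the first claim, rewriting the target as $\alpha \ge \frac{2\ln\alpha}{\ln\sigma}$ and multiplying through by $\ln\sigma > 0$ shows it is equivalent to $\phi(\alpha) := \alpha\ln\sigma - 2\ln\alpha \ge 0$. Differentiating gives $\phi'(\alpha) = \ln\sigma - \frac{2}{\alpha}$, which is positive precisely when $\alpha > \frac{2}{\ln\sigma}$. Since the hypothesis provides $\alpha \ge \frac{8}{\ln\sigma} > \frac{2}{\ln\sigma}$, the function $\phi$ is increasing on $[\frac{8}{\ln\sigma},\infty)$, so its minimum over the admissible range is attained at the left endpoint $\alpha = \frac{8}{\ln\sigma}$. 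Evaluating there gives $\phi(\frac{8}{\ln\sigma}) = 8 - 2\ln\frac{8}{\ln\sigma} = 8 - 2\ln 8 + 2\ln(\ln\sigma)$, and this is nonnegative exactly when $\ln(\ln\sigma) \ge \ln 8 - 4 = \ln(8/e^4)$, i.e. when $\ln\sigma \ge \frac{8}{e^4}$ — which is precisely the second hypothesis. Hence $\phi(\alpha) \ge \phi(\frac{8}{\ln\sigma}) \ge 0$, proving the first claim.

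For the second claim I would set $x_0 := \log_\sigma\alpha^2$ and study $f(x) := \sigma^x - \alpha x$ on $[x_0,\infty)$, aiming to show $f(x) \ge 0$; note $x_0 = \frac{2\ln\alpha}{\ln\sigma} > 0$ since $\alpha > 1$, so dividing by $x$ afterward is legitimate. Because $\sigma > 1$, for $x \ge x_0$ we have $\sigma^x \ge \sigma^{x_0} = \alpha^2$, whence $f'(x) = \sigma^x\ln\sigma - \alpha \ge \alpha^2\ln\sigma - \alpha = \alpha(\alpha\ln\sigma - 1) > 0$, using $\alpha\ln\sigma \ge 8$. Thus $f$ is increasing on $[x_0,\infty)$, so $f(x) \ge f(x_0) = \alpha^2 - \alpha x_0 = \alpha(\alpha - x_0)$; the first claim gives $\alpha \ge x_0$, so $f(x_0) \ge 0$ and therefore $\sigma^x \ge \alpha x$ for all $x \ge x_0$, which is exactly $\frac{\sigma^x}{x} \ge \alpha$.

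The crux of the whole argument is the first claim: everything hinges on recognizing that the constant $8$ and the threshold $\frac{8}{e^4}$ are tuned so that the worst case $\alpha = \frac{8}{\ln\sigma}$ drives $\phi$ to vanish exactly at $\ln\sigma = \frac{8}{e^4}$, so the endpoint computation must be carried out sharply rather than with crude bounds such as $\ln\alpha \le \alpha/e$. Once this is in place, the second claim follows almost mechanically, because the identity $\sigma^{x_0} = \alpha^2$ turns the first inequality into the boundary nonnegativity $f(x_0) = \alpha(\alpha - x_0) \ge 0$, while $\alpha\ln\sigma \ge 8$ supplies the monotonicity that propagates it to every $x \ge x_0$.
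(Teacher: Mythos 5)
Your proof is correct and follows essentially the same route as the paper's: both claims are handled by showing $\phi$ (up to the harmless factor $\ln\sigma$) and $f$ (the paper's $\psi$) are increasing on the relevant intervals and then checking nonnegativity at the left endpoint, with the second claim bootstrapping off the first via $\sigma^{x_0}=\alpha^2$. Your explicit endpoint computation $8-2\ln 8+2\ln(\ln\sigma)\ge 0 \iff \ln\sigma\ge 8/e^4$ fills in a step the paper only asserts, and your remark that $x_0>0$ justifies the division by $x$, but these are refinements of the same argument rather than a different one.
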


\begin{proof}
Define $\phi(\alpha) = \alpha - \frac{2\ln \alpha}{\ln\sigma}$. Then $\phi'(\alpha) = 1 - \frac{2}{\alpha\ln\sigma} > 0, \forall\, \alpha > \frac{2}{\ln \sigma}$. Hence, $\phi(\cdot)$ is increasing on $(\frac{2}{\ln \sigma}, \infty)$. In addition, the condition $\ln\sigma \ge \frac{8}{e^4}$ implies $\phi(\frac{8}{\ln \sigma}) \ge 0$. Thus, for $\alpha \ge \frac{8}{\ln \sigma}$, it holds $\phi(\alpha)\ge 0$ that is equivalent to $\alpha\ge \log_\sigma \alpha^2$. 

Now define $\psi(x) = \sigma^x - \alpha x$. Then $\psi'(x) = \sigma^x\cdot\ln \sigma - \alpha$, and thus for any $x \ge \log_\sigma\alpha^2$, we have $\psi'(x)\ge \psi'(\log_\sigma\alpha^2)=\alpha^2 \ln\sigma - \alpha \ge 7\alpha >0$, where we have used $\alpha \ge \frac{8}{\ln \sigma}$. Hence, $\psi(\cdot)$ is increasing on $[\log_\sigma\alpha^2, \infty)$, and for any $x \ge \log_\sigma\alpha^2$, it holds $\psi(x) \ge \psi(\log_\sigma\alpha^2) = \alpha^2 - \alpha \log_\sigma \alpha^2\ge0$. This completes the proof.
\qed\end{proof}

The theorem below gives the outer iteration number of Algorithm~\ref{alg:ialm-stoc} to produce an $\vareps$-KKT point in expectation of \eqref{eq:ncp-stoc}.

\begin{theorem}[Outer iteration complexity of Stoc-iALM]\label{thm:stoc-ialm-conv}
In \eqref{eq:alm-y-stoc}, set  
$\gamma_k = \gamma_0,\forall\, k\ge0$ for some $\gamma_0 > 0$ such that  
$\frac{\sqrt{8} B_c \gamma_0}{\beta_0 v \vareps} \ge \frac{8}{\ln\sigma}$.  
Then under Assumptions~\ref{assump:comp-stoc} and \ref{assump:reg-stoc}, Algorithm~\ref{alg:ialm-stoc} needs at most $K$ outer iterations to find an $\vareps$-KKT point in expectation of \eqref{eq:ncp-stoc},  
where  
		\begin{equation} \label{eq:K-stoc}
		K = \max\left\{\left\lceil \log_\sigma \frac{\sqrt{8}\sqrt{\vareps^2 + B_0^2 + M^2}}{\beta_0 v \vareps}\right\rceil, \, \left\lceil 2\log_\sigma \frac{\sqrt{8} B_c \gamma_0}{\beta_0 v \vareps} \right\rceil\right\} + 1.
		\end{equation}
\end{theorem}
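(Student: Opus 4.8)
The plan is to verify the two requirements of Definition~\ref{def:eps-kkt-stoc} separately for the iterate $\vx^{k+1}$ and then find how large $k$ must be so that both hold. I would first dispose of the \emph{stationarity} condition, which I expect to come essentially for free from the subroutine guarantee \eqref{eq:ialm-x-stoc}. The key observation is that the $\vx$-subdifferential of the AL function factors as
\[
\partial_x \cL_{\beta_k}(\vx^{k+1},\vy^k) = \partial f_0(\vx^{k+1}) + J_\vc^\top(\vx^{k+1})\big(\vy^k + \beta_k\vc(\vx^{k+1})\big).
\]
Hence, taking the multiplier $\hat\vy^{k+1} := \vy^k + \beta_k \vc(\vx^{k+1})$, the quantity $\dist(\vzero, \partial f_0(\vx^{k+1}) + J_\vc^\top(\vx^{k+1})\hat\vy^{k+1})$ equals $\dist(\vzero, \partial_x\cL_{\beta_k}(\vx^{k+1},\vy^k))$. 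Taking total expectation in \eqref{eq:ialm-x-stoc} (via the tower property over $\vy^k$) then immediately yields the second inequality of Definition~\ref{def:eps-kkt-stoc} for \emph{every} $k$. So the stationarity half never constrains $K$; only feasibility does.

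The main work is the \emph{feasibility} bound $\EE[\|\vc(\vx^{k+1})\|^2]\le\vareps^2$, where Assumption~\ref{assump:reg-stoc} is essential. Fixing a realization, I would let $\vd$ attain $\dist(\vzero, \partial_x\cL_{\beta_k}(\vx^{k+1},\vy^k))$, so that $\vd = \nabla g(\vx^{k+1}) + \vs_h + J_\vc^\top(\vx^{k+1})\vy^k + \beta_k J_\vc^\top(\vx^{k+1})\vc(\vx^{k+1})$ for some $\vs_h\in\partial h(\vx^{k+1})$. Using Assumption~\ref{assump:comp-stoc} I write $\vs_h = \vn + \vw$ with $\vn\in\cN_\cX(\vx^{k+1})$ and $\|\vw\|\le M$, solve for $-J_\vc^\top\vc$, and absorb $\tfrac{1}{\beta_k}\vn$ into the cone (since $\cN_\cX$ is a cone). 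This gives
\[
\dist\big(-J_\vc^\top(\vx^{k+1})\vc(\vx^{k+1}),\,\cN_\cX(\vx^{k+1})\big)\le \tfrac{1}{\beta_k}\big(\|\vd\| + B_0 + M + B_c\|\vy^k\|\big),
\]
where I bound $\|\nabla g\|\le B_0$, $\|\vw\|\le M$, and $\|J_\vc^\top\vy^k\|\le B_c\|\vy^k\|$ via \eqref{P3-eqn-33-stoc}. Assumption~\ref{assump:reg-stoc} then controls $v\|\vc(\vx^{k+1})\|$ by the same right-hand side.

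To pass to expectations I would square, apply $(a+b+c+d)^2\le 4(a^2+b^2+c^2+d^2)$, and use $\EE[\|\vd\|^2]\le\vareps^2$ together with the deterministic multiplier growth $\|\vy^k\|\le k\gamma_0$, which holds because each update \eqref{eq:alm-y-stoc} changes $\|\vy\|$ by at most $\min\{\beta_k\|\tilde\vc\|,\gamma_k\}\le\gamma_0$ and $\vy^0=\vzero$. This produces
\[
v^2\,\EE[\|\vc(\vx^{k+1})\|^2]\le \tfrac{4}{\beta_k^2}\big(\vareps^2+B_0^2+M^2\big) + \tfrac{4B_c^2\gamma_0^2 k^2}{\beta_k^2}.
\]
I would then force each summand to be at most $\tfrac{1}{2}v^2\vareps^2$. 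The first gives $\beta_k\ge \sqrt8\,\sqrt{\vareps^2+B_0^2+M^2}/(v\vareps)$, i.e.\ the first term in \eqref{eq:K-stoc}. The second requires $\sigma^k/k \ge \sqrt8\,B_c\gamma_0/(\beta_0 v\vareps)=:\alpha$, which is exactly the situation Lemma~\ref{lem:bd-vy-beta} is designed for: under the theorem's hypothesis $\alpha\ge 8/\ln\sigma$ (and $\sigma$ large enough for the lemma to apply), it holds once $k\ge \log_\sigma\alpha^2 = 2\log_\sigma\alpha$, giving the second term in \eqref{eq:K-stoc}. Taking the maximum of the two (plus one, to account for indexing $\vx^{k+1}$ rather than $\vx^k$) finishes the proof.

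I anticipate the main obstacle to be the feasibility derivation, specifically the normal-cone bookkeeping that converts near-stationarity of $\cL_{\beta_k}$ into a bound on $\dist(-J_\vc^\top\vc,\cN_\cX)$, and then the correct pairing of the $\sigma^k/k$ requirement with Lemma~\ref{lem:bd-vy-beta}. Everything else (the stationarity half and the second-moment estimates) is routine once the decomposition $\vs_h=\vn+\vw$ and the multiplier growth bound are in place.
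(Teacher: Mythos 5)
Your proposal is correct and follows essentially the same route as the paper's proof: the stationarity half is read off directly from \eqref{eq:ialm-x-stoc} with multiplier $\vy^k+\beta_k\vc(\vx^{k+1})$, and the feasibility half converts near-stationarity of $\cL_{\beta_k}$ into a bound on $\dist(-J_\vc^\top\vc,\cN_\cX)$ via Assumption~\ref{assump:comp-stoc}, squares with the same factor-of-$4$ Young step, invokes $\|\vy^k\|\le k\gamma_0$ and Lemma~\ref{lem:bd-vy-beta}, and splits the bound into two $\vareps^2/2$ pieces matching the two terms of \eqref{eq:K-stoc}. The only cosmetic difference is that you decompose the attaining subgradient as $\vs_h=\vn+\vw$ element-wise, whereas the paper phrases the same step as the distance inequality \eqref{eq:dist-ineq}; your parenthetical caveat that $\sigma$ must also satisfy the lemma's condition $\ln\sigma\ge 8/e^4$ is, if anything, slightly more careful than the theorem statement itself.
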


\begin{proof}
First, by $\vy^0 = \vzero$, the $\vy$-update in \eqref{eq:alm-y-stoc}, and the choice of $\gamma_k$, we have from the triangle inequality that for any $k \ge 0$,
		\begin{align}
			\Vert \vy^k \Vert &\le \sum_{t=0}^{k-1} \gamma_t  = k \gamma_0, \label{eq:y-bound-stoc}
		\end{align}
where by the convention we define $\sum_{t=0}^{k-1} \gamma_t = 0$ if $k=0$.		
		
Second, from \eqref{eq:regularity-stoc}, we have 
\begin{align}\label{eq:reg-iter}
\EE \left[\Vert \vc(\vx^k) \Vert^2\right] \le & ~\frac{1}{v^2} \EE \left[ \dist \left(-J_\vc(\vx^k)^{\top} \vc(\vx^k), \cN_{\cX}(\vx^k)\right)^2\right]\cr
 = & ~ \frac{1}{v^2 \beta_{k-1}^2} \EE \left[ \dist \left(-\beta_{k-1} J_\vc(\vx^k)^{\top} \vc(\vx^k), \beta_{k-1}\cN_{\cX}(\vx^k)\right)^2\right]\cr
 = & ~ \frac{1}{v^2 \beta_{k-1}^2} \EE \left[ \dist \left(-\beta_{k-1} J_\vc(\vx^k)^{\top} \vc(\vx^k), \cN_{\cX}(\vx^k)\right)^2\right],
\end{align}
where the last equation follows from $\cN_\cX(\vx) = \beta\cdot \cN_\cX(\vx), \forall\, \beta>0, \forall\, \vx\in \cX$. 
In addition, by $\partial h(\vx) \subseteq \cN_{\cX}(\vx) + \cB_M$ from Assumption~\ref{assump:comp-stoc}, it holds $\dist\big(\vz, \cN_{\cX}(\vx) + \cB_M\big) \le \dist\big(\vz, \partial h(\vx)\big)$ for any $\vz\in\RR^d$. Also, it holds from the triangle inequality that $\dist\big(\vz, \cN_{\cX}(\vx) \big) \le \dist\big(\vz, \cN_{\cX}(\vx) + \cB_M\big) + M$. Hence, 
\begin{equation}\label{eq:dist-ineq}
\dist\big(\vz, \cN_{\cX}(\vx) \big) \le \dist\big(\vz, \partial h(\vx)\big) +M, \forall\, \vx\in\cX, \forall\, \vz\in\RR^d.
\end{equation} 
Using \eqref{eq:dist-ineq} with $\vz = - \beta_{k-1} J_\vc(\vx^k)^{\top} \vc(\vx^k)$ and noticing 
$$\partial h(\vx^k)= \partial_x \cL_{\beta_{k-1}}(\vx^{k},\vy^{k-1}) - \nabla g(\vx^{k}) - J_\vc(\vx^k)^{\top} \vy^{k-1} - \beta_{k-1} J_\vc(\vx^k)^{\top} \vc(\vx^k),$$
it holds
\begin{align*}
\dist \left(-\beta_{k-1} J_\vc(\vx^k)^{\top} \vc(\vx^k), \cN_{\cX}(\vx^k) \right) \le \dist \big(\vzero, \partial_x \cL_{\beta_{k-1}}(\vx^{k},\vy^{k-1}) - \nabla g(\vx^{k}) - J_\vc(\vx^k)^{\top} \vy^{k-1} \big) + M,
\end{align*}
which together with \eqref{eq:reg-iter} gives
		\begin{align*}
			&\EE \left[\Vert \vc(\vx^k) \Vert^2\right] \le \frac{1}{v^2 \beta_{k-1}^2} \EE \left[ \dist \big(0, \partial_x \cL_{\beta_{k-1}}(\vx^{k},\vy^{k-1}) - \nabla g(\vx^{k}) - J_\vc(\vx^k)^{\top} \vy^{k-1} \big) + M  \right] ^2.
		\end{align*}
Moreover, applying the triangle inequality to the right hand side of the inequality above, we have
\begin{equation}\label{eq:c-bound-2}
\begin{aligned}
& ~\EE \left[\Vert \vc(\vx^k) \Vert^2\right] \\
\le & ~ \frac{1}{v^2 \beta_{k-1}^2}\EE \left[  \Big(\dist \big(0, \partial_x \cL_{\beta_{k-1}}(\vx^{k},\vy^{k-1}) \big) + \Vert \nabla g(\vx^k) \Vert + \Vert J_\vc(\vx^k) \Vert \Vert \vy^{k-1} \Vert + M  \Big) ^2 \right], \forall\, k\ge1.
\end{aligned}
\end{equation}	
Now, using the Young's inequality and by \eqref{P3-eqn-33-stoc-a}, \eqref{P3-eqn-33-stoc-c}, \eqref{eq:ialm-x-stoc} and \eqref{eq:y-bound-stoc}, we obtain
\begin{equation}\label{eq:c-bound}
\EE [\Vert \vc(\vx^k) \Vert^2] \le \frac{4}{v^2 \beta_{k-1}^2} (\vareps^2 + B_0^2 + B_c^2 (k-1)^2\gamma_0^2 + M^2),\forall\, k\ge1.
\end{equation}
Since $\beta_k = \beta_0\sigma^k$, we have from the choice of $K$ in \eqref{eq:K-stoc} that $\frac{4}{v^2 \beta_{K-1}^2} (\vareps^2 + B_0^2  + M^2) \le \frac{\vareps^2}{2}$. In addition, let $\alpha=\frac{\sqrt{8} B_c \gamma_0}{\beta_0 v \vareps}$. Then from the choice of $K$, it holds $K-1 \ge \log_\sigma \alpha^2$, and thus from Lemma~\ref{lem:bd-vy-beta}, we have $\frac{\sigma^{K-1}}{K-1} \ge \alpha$. Hence,
$$\frac{4B_c^2 (K-1)^2\gamma_0^2}{v^2 \beta_{K-1}^2} = \frac{4B_c^2 (K-1)^2\gamma_0^2}{v^2 \beta_0^2 \sigma^{2(K-1)}} \le \frac{4B_c^2 \gamma_0^2}{v^2 \beta_0^2 \alpha^2}=\frac{\vareps^2}{2}.$$
Thus it follows from \eqref{eq:c-bound} that $\EE [\Vert \vc(\vx^K) \Vert^2] \le \vareps^2$.

		Finally, it holds from \eqref{eq:ialm-x-stoc} that
		\begin{equation*} 
			\EE \left[ \dist \left( \vzero, \partial f_0(\vx^{K}) + J_\vc(\vx^{K})^{\top} \ (\vy^{K-1} + \beta_{K-1}\vc(\vx^{K})) \right)^2 \right] \le \vareps^2.
		\end{equation*}
		Therefore, $\vx^K$ is an $\vareps$-KKT point in expectation of \eqref{eq:ncp-stoc} with the corresponding multiplier $\vy^{K-1} + \beta_{K-1} \vc(\vx^K)$, according to Definition \ref{def:eps-kkt-stoc}.
\qed
\end{proof}

\begin{remark}
A few remarks about Theorem~\ref{thm:stoc-ialm-conv} are as follows. First, the condition $\frac{\sqrt{8} B_c \gamma_0}{\beta_0 v \vareps} \ge \frac{8}{\ln\sigma}$ requires to know $v$. However, this is only for the ease of analysis. We do not actually need the exact value of $v$. Notice that we can assume $v\le 1$, because if \eqref{eq:regularity-stoc} holds for some $v>1$, it also holds with $v=1$. In this case, it suffices to pick $\gamma_0$ such that $\frac{\sqrt{8} B_c \gamma_0}{\beta_0 \vareps} \ge \frac{8}{\ln\sigma}$, which does not involve $v$. Second, for convex cases where $\vc(\cdot)$ consists of affine constraints, it is guaranteed that $\vc(\vx^{k+1}) = O(\frac{1}{\beta_k})$ if \eqref{eq:ialm-x-stoc} holds in a deterministic way (i.e., without the expectation) and the strong duality holds for \eqref{eq:ncp-stoc}; see \cite{xu2021iteration} for example. In this case, with $\gamma_k=\gamma_0,\forall\, k$ for an appropriate $\gamma_0$, the dual update will accept $\beta_k$ as the stepsize for all $k$. Third, the result in Theorem~\ref{thm:stoc-ialm-conv} does not depend on the setting of $\tilde\vc(\vx^{k+1})$. However, the multiplier update by the \emph{classic} ALM is a key to have good practical performance. Hence for the special case in \eqref{eq:ncp-finite}, we set  $\tilde{\vc}(\vx^{k+1})=\vc(\vx^{k+1})$, and for the general problem \eqref{eq:ncp-stoc}, we will choose $M_k=\Theta(\frac{1}{\vareps^2})$ so that $\EE[\|\tilde{\vc}(\vx^{k+1})-\vc(\vx^{k+1})\|^2] = O(\frac{1}{M_k})= O(\vareps^2)$ and thus $\tilde{\vc}(\vx^{k+1})$ will be close to $\vc(\vx^{k+1})$ in expectation.
 Finally, we have not specified the subroutine. To have a low overall complexity in terms of the number of sample/component gradients, it is important to obtain each $\vx^{k+1}$ efficiently. In the next section, we will exploit the problem structure and design an efficient subroutine to make \eqref{eq:ialm-x-stoc} hold for each $k$. 
\end{remark}


\section{Momentum-accelerated subroutine and overall oracle complexity} \label{sec:ialm+pstorm-conv}
In this section, we give a subroutine to find each $\vx^{k+1}$ in Algorithm~\ref{alg:ialm-stoc} and thus have a complete algorithm.  
Besides Assumptions~\ref{assump:oracle}-\ref{assump:reg-stoc}, we make the following assumptions.
\begin{assumption}[mean-squared smoothness]\label{assump:mean-smooth}
		For any $\vu, \vv \in \dom(h)$, $G_0(\cdot,\xi)$ and $\vC(\cdot,\xi)$ satisfy the mean-squared smoothness conditions: 
		\begin{equation*}
			\begin{aligned}
				&\EE_{\xi} \big[\Vert \nabla G_0(\vu,\xi) - \nabla G_0(\vv,\xi) \Vert^2\big] \le L_0^2 \Vert \vu-\vv \Vert^2, \\
				&\EE_{\xi} \big[\Vert J_\vC(\vu,\xi) - J_\vC(\vv,\xi) \Vert^2\big] \le L_J^2 \Vert \vu-\vv \Vert^2, \\ 
				&\EE_{\xi_1,\xi_2} \big[\Vert J_\vC(\vu,\xi_1)^{\top} \vC(\vu,\xi_2) - J_\vC(\vv,\xi_1)^{\top} \vC(\vv,\xi_2) \Vert^2\big]  \le L_J^2 \Vert \vu-\vv \Vert^2,
			\end{aligned}
		\end{equation*}
where $\xi_1$ and $\xi_2$ are independent and follow the same distribution as $\xi$ in \eqref{eq:ncp-stoc}.		
	\end{assumption}
\begin{remark}
Mean-squared smoothness is needed to have accelerated convergence for a stochastic gradient-type method on solving nonconvex stochastic problems \cite{fang2018spider, cutkosky2019momentum, tran2022hybrid, arjevani2022lower, xu2022momentum}. It naturally holds for the special case in 
\eqref{eq:ncp-finite} if each component of the objective and constraint functions is smooth. This condition is crucial to obtain our $O(\vareps^{-5})$ complexity result. However, the methods in \cite{ma2019proximally, boob2022stochastic}  can still only achieve a result of $O(\vareps^{-6})$ even with the mean-squared smoothness condition, as they do not exploit the structure.
\end{remark}
	
	\begin{assumption}[unbiasedness and bounded variance]\label{assump:var}
		For any $\vx\in\dom(h)$, the objective and constraint functions 
		satisfy 
		\begin{equation}\label{eq:unbias}
		\EE_{\xi} [\nabla G_0(\vx,\xi)] = \nabla g(\vx), \quad \EE_{\xi} [J_\vC(\vx,\xi)] = J_\vc(\vx).
		\end{equation}
		Also, there exist $\sigma_g, \sigma_c > 0$ such that for any $\vx\in\dom(h)$,
		\begin{equation*}
			\begin{aligned}
				&\EE_{\xi} \left[\Vert \nabla G_0(\vx,\xi)-\nabla g(\vx) \Vert^2\right] \le \sigma_g^2,\\
				& \EE_{\xi} \left[\Vert J_\vC(\vx,\xi) - J_\vc(\vx) \Vert_2^2\right] \le \sigma_c^2, \\ 
				&\EE_{\xi_1,\xi_2} \left[\Vert J_\vC(\vx,\xi_1)^{\top}\vC(\vx,\xi_2) - J_\vc(\vx)^{\top}\vc(\vx) \Vert^2\right] \le \sigma_c^2 , 
			\end{aligned}
		\end{equation*} 
where $\xi_1$ and $\xi_2$ are independent and follow the same distribution as $\xi$.		
	\end{assumption}

\begin{remark}\label{rem:smooth}
Under Assumption~\ref{assump:mean-smooth} and the unbiasedness condition \eqref{eq:unbias}, it can be easily shown that $g(\cdot)$ is $L_0$-smooth and $\vc(\cdot)$ is $L_J$-smooth; see the arguments at the end of section 2.2 of \cite{tran2022hybrid}. 
\end{remark}


\subsection{PStorm subroutine} \label{sec:pstorm}
The smooth part of the AL function $\cL_{\beta_k}(\cdot, \vy^k)$ has a smoothness parameter depending on $\beta_k$ that eventually depends on a given tolerance $\vareps$. Hence, to achieve a low-order overall complexity result, we need a subroutine whose complexity result has a low-order dependence not only on the pre-given stationarity violation but also on the smoothness parameter. With the mean-squared smoothness condition, the momentum-based variance-reduced proximal stochastic gradient method (PStorm) in \cite{xu2022momentum} is the one that meets our requirements. Below we first give a modified PStorm with a postprocessing step and then in the next subsection, discuss how to apply it to find $\vx^{k+1}$ in Algorithm~\ref{alg:ialm-stoc}.

Consider the problem
\begin{equation}\label{eq:comp-prob-stoc}
		F^*:= \min_{\vx\in\RR^d} ~\big\{F(\vx) := G(\vx)+H(\vx)\big\},
	\end{equation}
where $H$ is a closed convex function, and $G$ is smooth and possibly nonconvex.	 Let $\cA(\vx, \zeta)$ be a stochastic map that depends on a random variable $\zeta$. Suppose the following conditions hold: for some finite constants $L_G$ and $\sigma_G$,
\begin{subequations}\label{eq:cond-pstorm}
\begin{align}
&\EE_\zeta\big[\|\cA(\vx_1, \zeta)-\cA(\vx_2, \zeta)\|^2\big] \le L_G^2\|\vx_1-\vx_2\|^2,\forall\, \vx_1, \vx_2\in\dom(H), \\
&\EE_\zeta\big[\cA(\vx,\zeta)\big] = \nabla G(\vx), \ \EE\big[\|\cA(\vx,\zeta) - \nabla G(\vx)\|^2\big] \le \sigma_G^2, \forall\, \vx\in \dom(H).
\end{align}
\end{subequations} 
With $\cA(\cdot, \zeta)$ that satisfies the conditions above, we give the modified PStorm in Algorithm~\ref{alg:pstorm} and the complexity result in Lemma~\ref{lem:pstorm-thm3}.

	\begin{algorithm}[h] 
		\caption{
		PStorm$(G,H,\vx^0, L_G, \sigma_G, T, m_0, \bar{\eta}, \delta, \cA, \vareps)$ for solving \eqref{eq:comp-prob-stoc}}\label{alg:pstorm}
		\DontPrintSemicolon
		\textbf{Input:} initial point $\vx^0\in\dom(H)$, max iteration number $T$, smoothness constant $L_G$, variance bound $\sigma_G^2$, step size $\bar\eta$, momentum parameter $\delta \in (0,1)$, and an unbiased gradient estimator $\cA$ satisfying \eqref{eq:cond-pstorm}\;
		\textbf{Initialization:} Let $\vd^0 = \frac{1}{m_0} \sum_{\zeta \in B_0} \cA(\vx^0,\zeta)$ with $B_0$ containing $m_0$ i.i.d. samples.\;
		\For{$t=0,1,\ldots,T-1$}{
			\begin{equation}\label{eq:pstorm-x}
				\vx^{t+1} = \prox_{\bar{\eta} H}(\vx^{t} - \bar{\eta} \vd^t). 
			\end{equation}
			\hspace{-0.2cm}
			Compute
			\begin{align*}
				\vv^{t+1} &=  \cA(\vx^{t+1},\zeta^{t+1}), \quad 
				 \vu^{t+1} =  \cA(\vx^{t},\zeta^{t+1}). 
			\end{align*}
			\hspace{-0.2cm}
			Let $\vd^{t+1} = \vv^{t+1}+(1-\delta)(\vd^t-\vu^{t+1})$.
		}
		Choose $\vx^{\tau}$ uniformly at random from $\{\vx^0,\dots,\vx^{T-1}\}$. \;		
		\textbf{Sampling:} Set $m_1 = \left\lceil \frac{48\sigma_G^2}{\vareps^2} \right\rceil$, obtain a set $B_1$ of $m_1$ i.i.d samples of $\zeta$, and compute $\vv = \frac{1}{m_1}\sum_{\zeta \in B_{1}} \cA(\vx^{\tau},\zeta) $.\; 
		\textbf{Postprocessing:} output $\hat{\vx} = \prox_{\bar{\eta} H}(\vx^{\tau} - \bar{\eta} \vv)$. 
	\end{algorithm}

	\begin{lemma}\label{lem:pstorm-thm3} 
	Assume the conditions in \eqref{eq:cond-pstorm}. Given an error tolerance $\vareps>0$, choose parameters of Algorithm~\ref{alg:pstorm} as follows:
	\begin{equation}\label{eq:para-subroutine}
	\begin{aligned}
	&\bar\eta = \frac{\eta}{L_G \sqrt[3]{T}},\ \delta = \frac{4\eta^2+10\eta^2(2-\eta T^{-\frac{1}{3}})}{T^{\frac{2}{3}}+4\eta^2}, \ m_0 = \lceil c_0 \sqrt[3]{T}\rceil \\
	&T = \left\lceil \frac{ 48^{\frac{3}{2}} 40^{\frac{3}{2}}\left( \frac{L_G[F(\vx^0)-F^*]_{1+}}{\eta}+\frac{\sigma_G^2}{20c_0\eta^2}+\frac{24^2\sigma_G^2 \eta^2}{10} \right)^{\frac{3}{2}} }{\vareps^3} \right\rceil,
	\end{aligned}
	\end{equation}
for some positive constants $\eta$ and $c_0$, where $[a]_{1+}:=\max\{a,1\}$ for any $a\in\RR$.	If $\eta \le \frac{\sqrt[3]{T}}{10}$, then the output $\hat\vx$ satisfies $\EE\big[\dist(\vzero, \partial F(\hat{\vx}))^2\big] \le \vareps^2$.
	\end{lemma}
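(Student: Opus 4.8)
The plan is to split the argument into two parts: first invoke the convergence guarantee of the base PStorm iterations from \cite{xu2022momentum} to control a proximal-gradient-mapping quantity at the randomly selected iterate $\vx^\tau$, and then show that the newly added postprocessing step converts this gradient-mapping bound into the desired bound on $\EE[\dist(\vzero,\partial F(\hat\vx))^2]$. Throughout I would write $\mathcal{G}(\vx^\tau) := \frac{1}{\bar\eta}\big(\vx^\tau - \prox_{\bar\eta H}(\vx^\tau - \bar\eta\nabla G(\vx^\tau))\big)$ for the exact-gradient gradient mapping, and record that the Lipschitz-in-mean condition \eqref{eq:cond-pstorm} forces $G$ to be $L_G$-smooth (apply Jensen's inequality to $\|\nabla G(\vu)-\nabla G(\vw)\| = \|\EE_\zeta[\cA(\vu,\zeta)-\cA(\vw,\zeta)]\|$), so that $\|\nabla G(\vu)-\nabla G(\vw)\|\le L_G\|\vu-\vw\|$.

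First I would plug the parameter choices of \eqref{eq:para-subroutine} into the base PStorm rate, which has the form $\EE[\|\mathcal{G}(\vx^\tau)\|^2]\le 40\,S/T^{2/3}$, where $S$ abbreviates the bracketed quantity $\frac{L_G[F(\vx^0)-F^*]_{1+}}{\eta}+\frac{\sigma_G^2}{20c_0\eta^2}+\frac{24^2\sigma_G^2\eta^2}{10}$. Since the prescribed $T$ satisfies $T^{2/3}\ge 48\cdot 40\cdot S/\vareps^2$, this immediately yields $\EE[\|\mathcal{G}(\vx^\tau)\|^2]\le \vareps^2/48$. I would also single out the one consequence of the stepsize hypothesis $\eta\le T^{1/3}/10$ that drives the constants, namely $L_G\bar\eta = \eta/T^{1/3}\le \frac{1}{10}$.

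The postprocessing analysis is the novel part. Setting $\vx^+ := \prox_{\bar\eta H}(\vx^\tau-\bar\eta\nabla G(\vx^\tau))$ and recalling $\hat\vx = \prox_{\bar\eta H}(\vx^\tau-\bar\eta\vv)$, the optimality condition of the prox defining $\hat\vx$ gives $\frac{1}{\bar\eta}(\vx^\tau-\hat\vx)-\vv\in\partial H(\hat\vx)$, hence $\nabla G(\hat\vx)+\frac{1}{\bar\eta}(\vx^\tau-\hat\vx)-\vv\in\partial F(\hat\vx)$, so that
\[
\dist(\vzero,\partial F(\hat\vx)) \le \|\nabla G(\hat\vx)-\vv\| + \frac{1}{\bar\eta}\|\vx^\tau-\hat\vx\|.
\]
I would bound both terms in terms of $\|\mathcal{G}(\vx^\tau)\|$ and the fresh sampling error $e:=\|\vv-\nabla G(\vx^\tau)\|$. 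Nonexpansiveness of the prox gives $\|\hat\vx-\vx^+\|\le \bar\eta e$, whence $\frac{1}{\bar\eta}\|\vx^\tau-\hat\vx\|\le \|\mathcal{G}(\vx^\tau)\|+e$ and $\|\hat\vx-\vx^\tau\|\le \bar\eta\big(\|\mathcal{G}(\vx^\tau)\|+e\big)$; $L_G$-smoothness and the triangle inequality then give $\|\nabla G(\hat\vx)-\vv\|\le L_G\bar\eta\|\mathcal{G}(\vx^\tau)\|+(1+L_G\bar\eta)e$. Summing and using $L_G\bar\eta\le\frac{1}{10}$ yields $\dist(\vzero,\partial F(\hat\vx))\le \frac{11}{10}\|\mathcal{G}(\vx^\tau)\| + \frac{21}{10}e$.

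Finally I would square, take expectations, and apply $(a+b)^2\le 2a^2+2b^2$. The sampling error is controlled by the variance assumption: conditioned on $\vx^\tau$, $\vv$ averages $m_1$ i.i.d.\ unbiased estimators, so $\EE[e^2]\le \sigma_G^2/m_1\le \vareps^2/48$ by the choice $m_1=\lceil 48\sigma_G^2/\vareps^2\rceil$. Combined with $\EE[\|\mathcal{G}(\vx^\tau)\|^2]\le\vareps^2/48$, this gives $\EE[\dist(\vzero,\partial F(\hat\vx))^2]\le \frac{2}{48}\big(\frac{121}{100}+\frac{441}{100}\big)\vareps^2 \le \vareps^2$. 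The hard part will be the middle step: propagating the two independent error sources (the gradient-mapping residual inherited from the iterations and the variance of the postprocessing average) through both the prox map and $\nabla G$ without inflating the constants, where the stepsize restriction $L_G\bar\eta\le\frac{1}{10}$ is precisely what keeps the smoothness-induced cross terms small enough for the final constant to stay below $1$.
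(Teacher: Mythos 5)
Your proof is correct and follows essentially the same route as the paper's: both invoke Corollary 2.2 of the PStorm paper (with the same parameter accounting giving $\EE[\|\mathcal{G}(\vx^\tau)\|^2]\le\vareps^2/48$), use the same subgradient element $\frac{\vx^\tau-\hat\vx}{\bar\eta}+\nabla G(\hat\vx)-\vv\in\partial F(\hat\vx)$, and combine prox-nonexpansiveness, $L_G$-smoothness of $G$, and the variance bound $\EE[\|\vv-\nabla G(\vx^\tau)\|^2]\le\sigma_G^2/m_1\le\vareps^2/48$. The only difference is bookkeeping: you track constants at the norm level (obtaining $\tfrac{11}{10}$ and $\tfrac{21}{10}$ via $L_G\bar\eta\le\tfrac{1}{10}$) before squaring, whereas the paper squares first and applies Young's inequality with three terms under the weaker bound $L_G\bar\eta\le 1$.
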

	
\begin{proof}
First,  directly from Corollary 2.2 of \cite{xu2022momentum}, we have  
		\begin{equation}\label{eq:pstorm-opt}
			\EE\left[\left\Vert \frac{1}{\bar{\eta}} \Big( \vx^\tau - \prox_{\bar{\eta} H}\big(\vx^{\tau}-\bar{\eta} \nabla G(\vx^\tau)\big)\Big)  \right\Vert^2\right] \le \frac{\vareps^2}{48}.
		\end{equation}		
		Hence, by the postprocessing step of Algorithm \ref{alg:pstorm}, it holds that 
		\begin{align*}
			\EE\left[\left\Vert \frac{\hat{\vx}-\vx^{\tau}}{\bar{\eta}} \right\Vert^2\right] &= \EE \left[\left\Vert \frac{1}{\bar{\eta}} \Big(\vx^\tau-\prox_{\bar{\eta}H}(\vx^\tau-\bar{\eta}\vv)\Big) \right\Vert^2\right] \nonumber \\
			&\le \EE \left[\left( \left\Vert \frac{1}{\bar{\eta}} \Big(\vx^\tau-\prox_{\bar{\eta}H}\big(\vx^\tau-\bar{\eta}\nabla G(\vx^\tau)\big)\Big) \right\Vert + \Vert \vv - \nabla G(\vx^\tau) \Vert \right)^2 \right] \nonumber \\
			&\le 2\EE \left[\left\Vert \frac{1}{\bar{\eta}} \Big(\vx^\tau-\prox_{\bar{\eta}H}\big(\vx^\tau-\bar{\eta}\nabla G(\vx^\tau)\big)\Big) \right\Vert^2\right] + 2\EE\big[\Vert \vv - \nabla G(\vx^\tau) \Vert^2\big], 
					\end{align*}
	where the first inequality follows from the 	nonexpansiveness of the proximal gradient mapping, and the second inequality holds due to the Young's inequality. Now by \eqref{eq:pstorm-opt} and noticing $\EE[\Vert \vv - \nabla G(\vx^\tau) \Vert^2 \le \frac{\sigma_G^2}{m_1} \le \frac{\vareps^2}{48}$, we have from the inequality above that
	\begin{equation} \label{eq:prox-bound}
	\EE\left[\left\Vert \frac{\hat{\vx}-\vx^{\tau}}{\bar{\eta}} \right\Vert^2\right] \le \frac{\vareps^2}{12}.
	\end{equation}
		
In addition, we have $\frac{\vx^\tau-\hat{\vx}}{\bar{\eta}} + \nabla G(\hat{\vx}) - \vv \in \partial F(\hat{\vx})$, and thus
		\begin{align*}
			\EE\big[\dist(\vzero, \partial F(\hat{\vx}))^2\big] &\le \EE\left[\left\Vert \frac{\vx^\tau-\hat{\vx}}{\bar{\eta}} + \nabla G(\hat{\vx}) - \nabla G(\vx^\tau) + \nabla G(\vx^\tau) - \vv \right\Vert^2\right] \nonumber\\
			&\le 3\EE \left[\left\Vert \frac{\vx^\tau-\hat{\vx}}{\bar{\eta}} \right\Vert^2\right] + 3\EE \big[\Vert \nabla G(\hat{\vx})-\nabla G(\vx^\tau) \Vert^2\big] + 3\EE \big[\Vert \nabla G(\vx^\tau) - \vv \Vert^2\big] \nonumber\\
			&\le (3+3L_G^2 \bar{\eta}^2) \EE\left[\left\Vert \frac{\hat{\vx}-\vx^\tau}{\bar{\eta}} \right\Vert^2\right] + \frac{3\vareps^2}{48} \nonumber\\
			&\le \frac{6\vareps^2}{12} + \frac{3\vareps^2}{6} \nonumber= \vareps^2,
		\end{align*}
		where we have used Young's inequality in the second inequality, the third inequality follows from the $L_G$-smoothness of $G$ and $\EE[\Vert \vv - \nabla G(\vx^\tau) \Vert^2 \le \frac{\vareps^2}{48}$, and the fourth inequality holds because of \eqref{eq:prox-bound} and $\bar{\eta} \le \frac{1}{L_G}$. This completes the proof. 
\qed\end{proof}	

Below we choose an appropriate $\eta$ and $c_0$ in Lemma~\ref{lem:pstorm-thm3} to obtain a complexity result that has a low-order dependence on $\sigma_G, L_G$ and $F(\vx^0)-F^*$.	

\begin{lemma}\label{lem:pstorm-comp} 
Assume the conditions in \eqref{eq:cond-pstorm}. Let $\vareps>0$ be given and satisfy $\vareps \le \frac{\sigma_G}{10}\sqrt{1920}\sqrt{3}\big(\frac{24^2}{10}\big)^{\frac{1}{2}}$. Choose parameters of Algorithm~\ref{alg:pstorm} as those in \eqref{eq:para-subroutine} with 
\begin{equation}\label{eq:choice-eta-c0}
\eta = \Big(\frac{10}{24^2}\Big)^{\frac{1}{3}}\frac{\big(L_G[F(\vx^0)-F^*]_{1+}\big)^{\frac{1}{3}}}{\sigma_G^{\frac{2}{3}}},\quad c_0 = \Big(\frac{24^2}{10}\Big)^{\frac{1}{3}}\frac{\sigma_G^{\frac{8}{3}}}{20 \big(L_G[F(\vx^0)-F^*]_{1+}\big)^{\frac{4}{3}}}.
\end{equation}
Then $\EE\big[\dist(\vzero, \partial F(\hat{\vx}))^2\big] \le \vareps^2$. The total number of calls to $\cA$ is
\begin{equation*}
\mathrm{Total}_\cA = \Theta\left(\frac{\sigma_G L_G[F(\vx^0)-F^*]_{1+}}{\vareps^3} + \frac{\sigma_G^3}{\vareps L_G[F(\vx^0)-F^*]_{1+}} + \frac{\sigma_G^{\frac{8}{3}}}{\big(L_G[F(\vx^0)-F^*]_{1+}\big)^{\frac{4}{3}}}\right),
\end{equation*}
where $[a]_{1+}:=\max\{a,1\}$ for any $a\in\RR$.
\end{lemma}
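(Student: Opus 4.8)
The plan is to split the statement into its two claims — the stationarity bound $\EE[\dist(\vzero,\partial F(\hat\vx))^2]\le\vareps^2$ and the count of oracle calls — and to dispatch the first by reducing to Lemma~\ref{lem:pstorm-thm3}. That lemma already delivers the stationarity bound under the single hypothesis $\eta\le\sqrt[3]{T}/10$, so the only real work for the first claim is to verify this inequality for the specific $\eta,c_0$ in \eqref{eq:choice-eta-c0}. The crucial preliminary computation — which also drives everything else — is to substitute the chosen $\eta$ and $c_0$ into the bracket defining $T$ in \eqref{eq:para-subroutine} and observe that the three summands $\frac{\Delta}{\eta}$, $\frac{\sigma_G^2}{20c_0\eta^2}$, and $\frac{24^2\sigma_G^2\eta^2}{10}$ (writing $\Delta:=L_G[F(\vx^0)-F^*]_{1+}$) are all equal, each collapsing to $(\frac{24^2}{10})^{1/3}\sigma_G^{2/3}\Delta^{2/3}=\frac{\Delta}{\eta}$. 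Hence the bracket equals $3\Delta/\eta$ and $T=\Theta(\sigma_G\Delta/\vareps^3)$.

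Next I would verify $\eta\le\sqrt[3]{T}/10$, equivalently $(10\eta)^3\le T$. Using $T\ge 48^{3/2}40^{3/2}(3\Delta/\eta)^{3/2}/\vareps^3$ from the ceiling, this follows once $1000\,\eta^{9/2}\vareps^3\le 48^{3/2}40^{3/2}3^{3/2}\Delta^{3/2}$; substituting $\eta^{9/2}=(\frac{10}{24^2})^{3/2}\Delta^{3/2}/\sigma_G^3$ and taking cube roots collapses this to exactly $\vareps\le\frac{\sigma_G}{10}\sqrt{1920}\sqrt3(\frac{24^2}{10})^{1/2}$, which is the standing hypothesis on $\vareps$. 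With $\eta\le\sqrt[3]{T}/10$ confirmed, Lemma~\ref{lem:pstorm-thm3} immediately yields the stationarity bound.

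For the oracle count I would tally the calls to $\cA$: $m_0$ in the initialization, two per inner iteration (for $\vv^{t+1}$ and $\vu^{t+1}$), and $m_1$ in the sampling step, so $\mathrm{Total}_\cA=m_0+2T+m_1$. Substituting orders, $2T=\Theta(\sigma_G\Delta/\vareps^3)$ is the first listed term; $m_1=\lceil 48\sigma_G^2/\vareps^2\rceil=\Theta(\sigma_G^2/\vareps^2)$; and $m_0=\lceil c_0\sqrt[3]{T}\rceil$ with $c_0=\Theta(\sigma_G^{8/3}/\Delta^{4/3})$ and $\sqrt[3]{T}=\Theta(\sigma_G^{1/3}\Delta^{1/3}/\vareps)$ gives $c_0\sqrt[3]{T}=\Theta(\sigma_G^3/(\vareps\Delta))$, the second listed term. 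The third listed term $\sigma_G^{8/3}/\Delta^{4/3}=\Theta(c_0)$ appears as the ceiling correction when one bounds $\lceil c_0\sqrt[3]{T}\rceil$ and uses $\sqrt[3]{T}\le\sqrt[3]{A}+1$ (with $A$ the argument of the ceiling defining $T$). Finally I would note that $m_1$ contributes no new order: by AM--GM, $\sigma_G^2/\vareps^2=\sqrt{(\sigma_G\Delta/\vareps^3)(\sigma_G^3/(\vareps\Delta))}\le\tfrac12(\sigma_G\Delta/\vareps^3+\sigma_G^3/(\vareps\Delta))$, so it is dominated by the first two terms; matching lower bounds come directly from $2T$ and $m_0\ge c_0\sqrt[3]{T}$.

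The routine parts are the algebra above; the one genuinely load-bearing step is the first, namely spotting that the prescribed $\eta,c_0$ were reverse-engineered to equalize the three terms of the bracket, since this is precisely what makes $T$ clean and makes the verification of $\eta\le\sqrt[3]{T}/10$ collapse exactly onto the stated upper bound on $\vareps$. The only place demanding real care is bookkeeping the two nested ceilings (in $T$ and in $m_0$) so that the final estimate holds as a $\Theta$ with matching upper and lower bounds rather than merely an $O$; this is where the otherwise-dominated third term $\sigma_G^{8/3}/\Delta^{4/3}$ legitimately enters the upper estimate.
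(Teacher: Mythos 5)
Your proposal is correct and follows essentially the same route as the paper: substitute $\eta,c_0$ into \eqref{eq:para-subroutine} (the three bracket terms indeed equalize, giving $T=\lceil 1920^{3/2}3^{3/2}(\tfrac{24^2}{10})^{1/2}\sigma_G L_G[F(\vx^0)-F^*]_{1+}/\vareps^3\rceil$), check $\eta\le\sqrt[3]{T}/10$ against the hypothesis on $\vareps$ to invoke Lemma~\ref{lem:pstorm-thm3}, then count $2T+m_0+m_1$ with the ceiling corrections producing the $\sigma_G^{8/3}/(L_G[F(\vx^0)-F^*]_{1+})^{4/3}$ term and AM--GM absorbing $\sigma_G^2/\vareps^2$. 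You merely spell out two steps the paper leaves implicit (the verification of $\eta\le\sqrt[3]{T}/10$, which the paper calls ``straightforward,'' and the matching lower bounds for the $\Theta$ claim), so there is nothing to flag.
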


\begin{proof}
First, plugging the chosen $\eta$ and $c_0$ into \eqref{eq:para-subroutine}, we have
\begin{equation}\label{eq:form-T}
T = \left\lceil \frac{ 1920^{\frac{3}{2}} 3^{\frac{3}{2}} \left( \frac{24^2}{10} \right)^{\frac{1}{2}}\sigma_G L_G[F(\vx^0)-F^*]_{1+} }{\vareps^3} \right\rceil,
\end{equation}
and it is straightforward to verify $\eta \le \frac{\sqrt[3]{T}}{10}$ by the condition $\vareps \le \frac{\sigma_G}{10}\sqrt{1920}\sqrt{3}\big(\frac{24^2}{10}\big)^{\frac{1}{2}}$. Hence, from Lemma~\ref{lem:pstorm-thm3}, it follows that $\EE\big[\dist(\vzero, \partial F(\hat{\vx}))^2\big] \le \vareps^2$.

Second, notice that Algorithm~\ref{alg:pstorm} calls $\cA$ twice for each iteration. Hence, accounting the calls to $\cA$ in the initial and postprocessing steps, we obtain the total number of calls to $\cA$ is
\begin{align*}
&~2T + m_0 + m_1 = 2T + \left\lceil c_0 \sqrt[3]{T}\right\rceil + \left\lceil \frac{48\sigma_G^2}{\vareps^2} \right\rceil \le 2T + c_0 \sqrt[3]{T}+  \frac{48\sigma_G^2}{\vareps^2} + 2\\
\le &~ 2 \frac{ 1920^{\frac{3}{2}} 3^{\frac{3}{2}} \left( \frac{24^2}{10} \right)^{\frac{1}{2}}\sigma_G L_G[F(\vx^0)-F^*]_{1+} }{\vareps^3} + c_0 \left(\frac{ 1920^{\frac{3}{2}} 3^{\frac{3}{2}} \left( \frac{24^2}{10} \right)^{\frac{1}{2}}\sigma_G L_G[F(\vx^0)-F^*]_{1+} }{\vareps^3} \right)^{\frac{1}{3}}+  \frac{48\sigma_G^2}{\vareps^2} + c_0 + 4\\
\le &~ 2\cdot 1920^{\frac{3}{2}} 3^{\frac{3}{2}} \left( \frac{24^2}{10} \right)^{\frac{1}{2}}\left(\frac{\sigma_G L_G[F(\vx^0)-F^*]_{1+}}{\vareps^3} + \frac{\sigma_G^3}{\vareps L_G[F(\vx^0)-F^*]_{1+}} + \frac{\sigma_G^2}{\vareps^2} + \frac{\sigma_G^{\frac{8}{3}}}{\big(L_G[F(\vx^0)-F^*]_{1+}\big)^{\frac{4}{3}}}\right) + 4.
\end{align*}
Now notice $\frac{\sigma_G L_G[F(\vx^0)-F^*]_{1+}}{\vareps^3} + \frac{\sigma_G^3}{\vareps L_G[F(\vx^0)-F^*]_{1+}} \ge \frac{2\sigma_G^2}{\vareps^2}$ and absorb universal constants into $\Theta$. We obtain the desired result and complete the proof.
\qed\end{proof}

\begin{remark}\label{rm:pstorm-comp}
In the choice of $\eta$ and $c_0$ in Lemma~\ref{lem:pstorm-comp}, we have implicitly assumed $\sigma_G>0$. Hence, the claimed result does not apply to a deterministic scenario. Also, the setting of $\eta$ and $c_0$ in \eqref{eq:choice-eta-c0} needs the value of $F^*$ that is unknown. However, we can replace $F(\vx^0)-F^*$ by its upper bound, which can be easily obtained, as we will see for the subproblems of Algorithm~\ref{alg:ialm-stoc}.
\end{remark}

\subsection{Overall Complexity} \label{sec:ialm+pstorm}	
To apply Algorithm \ref{alg:pstorm} to find each $\vx^{k+1}$, the key is to build a stochastic map that satisfies conditions similar to those in \eqref{eq:cond-pstorm}. The following lemma gives the key.

\begin{lemma}\label{lem:delta}
Under Assumptions~\ref{assump:oracle}, \ref{assump:mean-smooth} and \ref{assump:var}, given $\vy\in\RR^m$, let $\Phi(\cdot):= \cL_{\beta}(\cdot,\vy)-h(\cdot)$ and 
\begin{equation}\label{eq:Delta}
	\Delta(\vx, \zeta) := \nabla G_0(\vx, \xi_1) + J_\vC(\vx,\xi_1)^\top \vy + \beta J_\vC(\vx,\xi_1)^\top \vC(\vx,\xi_2), 
	\end{equation}
where $\zeta=(\xi_1, \xi_2)$, and $\xi_1$ and $\xi_2$ are two independent random variables that follow the same distribution as $\xi$ in \eqref{eq:ncp-stoc}. Then it holds
\begin{subequations}\label{eq:cond-delta}
\begin{align}
&\EE_\zeta\big[\|\Delta(\vx_1, \zeta)-\Delta(\vx_2, \zeta)\|^2\big] \le L_\Phi^2\|\vx_1-\vx_2\|^2,\forall\, \vx_1, \vx_2\in\dom(h), \label{eq:cond-delta-1}\\
&\EE_\zeta\big[\Delta(\vx,\zeta)\big] = \nabla \Phi(\vx), \ \EE\big[\|\Delta(\vx,\zeta) - \nabla \Phi(\vx)\|^2\big] \le \sigma_\Phi^2, \forall\, \vx\in \dom(h), \label{eq:cond-delta-2}
\end{align}
\end{subequations}
where 
$$L_\Phi = \sqrt{3L_0^2 + 3L_J^2\|\vy\|^2 + 3\beta^2 L_J^2}, \quad \sigma_\Phi = \sqrt{3\sigma_g^2 + 3\sigma_c^2 \|\vy\|^2 + 3\beta^2 \sigma_c^2}.$$
\end{lemma}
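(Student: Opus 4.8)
The plan is to first write $\nabla\Phi$ explicitly and observe that $\Delta(\vx,\zeta)$ is constructed as a term-by-term stochastic surrogate of it. Since $\Phi(\vx) = g(\vx) + \vy^\top\vc(\vx) + \frac{\beta}{2}\|\vc(\vx)\|^2$, its gradient is $\nabla\Phi(\vx) = \nabla g(\vx) + J_\vc(\vx)^\top\vy + \beta J_\vc(\vx)^\top\vc(\vx)$, which matches the definition \eqref{eq:Delta} after replacing $\nabla g$, $J_\vc$, and $\vc$ by their stochastic counterparts $\nabla G_0(\cdot,\xi_1)$, $J_\vC(\cdot,\xi_1)$, and $\vC(\cdot,\xi_2)$. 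Each of the three asserted properties is then established by splitting $\Delta$ (respectively its increment or its deviation from $\nabla\Phi$) into these three corresponding summands and invoking the matching line of Assumption~\ref{assump:mean-smooth} or Assumption~\ref{assump:var}.

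For the unbiasedness half of \eqref{eq:cond-delta-2} I would take $\EE_{\xi_1}$ of the first two terms, which by \eqref{eq:unbias} returns $\nabla g(\vx)$ and $J_\vc(\vx)^\top\vy$. The only place needing care is the bilinear term $J_\vC(\vx,\xi_1)^\top\vC(\vx,\xi_2)$: here I would use that $\xi_1$ and $\xi_2$ are \emph{independent}, so the expectation factors as $\EE_{\xi_1}[J_\vC(\vx,\xi_1)]^\top\,\EE_{\xi_2}[\vC(\vx,\xi_2)] = J_\vc(\vx)^\top\vc(\vx)$. This is precisely why two independent samples are drawn in \eqref{eq:Delta} instead of reusing a single $\xi$, which would otherwise leave a nonzero bias.

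For the mean-squared smoothness bound \eqref{eq:cond-delta-1}, I would write $\Delta(\vx_1,\zeta)-\Delta(\vx_2,\zeta)$ as the sum of the three differences, apply $\|a+b+c\|^2\le 3(\|a\|^2+\|b\|^2+\|c\|^2)$, and take $\EE_\zeta$. The first and third summands are bounded directly by the first and third lines of Assumption~\ref{assump:mean-smooth}, giving $3L_0^2\|\vx_1-\vx_2\|^2$ and $3\beta^2 L_J^2\|\vx_1-\vx_2\|^2$. For the middle term $\bigl(J_\vC(\vx_1,\xi_1)-J_\vC(\vx_2,\xi_1)\bigr)^\top\vy$ I would first apply the spectral-norm inequality $\|A^\top\vy\|\le\|A\|\,\|\vy\|$ to pull out $\|\vy\|^2$, then invoke the second line of Assumption~\ref{assump:mean-smooth}, yielding $3L_J^2\|\vy\|^2\|\vx_1-\vx_2\|^2$. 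Summing the three pieces recovers $L_\Phi^2 = 3L_0^2 + 3L_J^2\|\vy\|^2 + 3\beta^2 L_J^2$.

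The variance bound in \eqref{eq:cond-delta-2} follows in exactly the same fashion: subtract $\nabla\Phi(\vx)$ term by term, use the same factor-$3$ split, and bound the three resulting deviations by $3\sigma_g^2$, $3\sigma_c^2\|\vy\|^2$ (again via $\|A^\top\vy\|\le\|A\|\,\|\vy\|$), and $3\beta^2\sigma_c^2$ using the three lines of Assumption~\ref{assump:var}. I do not expect a genuine obstacle: the lemma is engineered so that the single potentially awkward object, the bilinear term $J_\vC^\top\vC$, is handled wholesale by the dedicated third conditions of Assumptions~\ref{assump:mean-smooth} and \ref{assump:var}, which bound $J_\vC(\cdot,\xi_1)^\top\vC(\cdot,\xi_2)$ as a single quantity, rather than by naively bounding a product of two stochastic factors (which would force extra boundedness assumptions on $J_\vC$ and $\vC$ separately). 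The only subtlety to track throughout is the independence of $\xi_1,\xi_2$, which is what makes both the unbiasedness of that term and the applicability of the third assumption conditions clean.
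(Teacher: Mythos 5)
Your proposal is correct and follows essentially the same route as the paper's proof: unbiasedness via the independence of $\xi_1,\xi_2$ factoring the expectation of the bilinear term, the factor-$3$ Young's inequality split for both the mean-squared smoothness and variance bounds, with the middle term handled through $\|A^\top\vy\|\le\|A\|\,\|\vy\|$ and the bilinear term bounded wholesale by the dedicated third lines of Assumptions~\ref{assump:mean-smooth} and \ref{assump:var}. No gaps; your observation about why two independent samples are drawn is exactly the point the paper's argument relies on.
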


\begin{proof}
Because $\xi_1$ and $\xi_2$ are independent, it holds
$$\EE_\zeta\big[\Delta(\vx, \zeta)\big] = \EE_{\xi_1}\big[\nabla G_0(\vx, \xi_1)\big] + \EE_{\xi_1}\big[J_\vC(\vx,\xi_1)^\top \vy\big] + \EE_{\xi_1}\big[ \beta J_\vC(\vx,\xi_1)\big]^\top \EE_{\xi_2}\big[\vC(\vx,\xi_2)\big].$$
Since $\xi_1$ and $\xi_2$ both follow the distribution of $\xi$, we have from the definition of $\vc(\cdot)$ in \eqref{eq:ncp-stoc} and \eqref{eq:unbias} that
$$\EE_\zeta\big[\Delta(\vx, \zeta)\big] = \nabla g(\vx) + J_\vc(\vx)^\top \vy + \beta J_\vc(\vx)^\top \vc(\vx) = \nabla \Phi(\vx).$$

In addition, by the Young's inequality, it follows that
\begin{align*}
&~\|\Delta(\vx_1, \zeta)-\Delta(\vx_2, \zeta)\|^2 \nonumber \\
\le & ~ 3\|\nabla G_0(\vx_1, \xi_1) - \nabla G_0(\vx_2, \xi_1)\|^2  + 3\|J_\vC(\vx_1,\xi_1)^\top \vy - J_\vC(\vx_2,\xi_1)^\top \vy\|^2 \nonumber \\
& ~ + 3\beta^2 \|J_\vC(\vx_1,\xi_1)^\top \vC(\vx_1,\xi_2) - J_\vC(\vx_2,\xi_1)^\top \vC(\vx_2,\xi_2)\|^2,
\end{align*}
which together with Assumption~\ref{assump:mean-smooth} gives
$$\EE_\zeta\big[\|\Delta(\vx_1, \zeta)-\Delta(\vx_2, \zeta)\|^2 \big] \le 3(L_0^2 + L_J^2\|\vy\|^2 + \beta^2 L_J^2)\|\vx_1-\vx_2\|^2.$$
Hence, \eqref{eq:cond-delta-1} holds. Similarly, by the Young's inequality and Assumption~\ref{assump:var}, we can show \eqref{eq:cond-delta-2} and complete the proof.
\qed\end{proof}
With Lemma~\ref{lem:delta}, we are able to apply Algorithm~\ref{alg:pstorm} to find each $\vx^{k+1}$. The theorem below gives the oracle complexity for the $k$-th outer iteration of Algorithm~\ref{alg:ialm-stoc}.

\begin{theorem}[Oracle complexity per outer iteration]\label{thm:k-th-complexity}
Let $(\vx^k, \vy^k)$ be the $k$-th iterate generated in Algorithm~\ref{alg:ialm-stoc} with a given tolerance $\vareps>0$. Define
$$F_k(\vx) := \cL_{\beta_k}(\vx, \vy^k),\quad F_k^*:=\min_\vx F_k(\vx), \quad \Phi_k (\vx) := F_k(\vx)  - h(\vx).$$ 
Under Assumptions~\ref{assump:oracle}, \ref{assump:mean-smooth} and \ref{assump:var}, if  $\vareps \le \frac{3\sqrt{\sigma_g^2 + \beta_0^2\sigma_c^2} }{10}\sqrt{1920}\big(\frac{24^2}{10}\big)^{\frac{1}{2}}$, then we can find $\vx^{k+1}$ that satisfies \eqref{eq:ialm-x-stoc} by Algorithm~\ref{alg:pstorm} with the following call
\begin{equation}\label{eq:k-call-pstorm}
\vx^{k+1} \gets \mathrm{PStorm}(\Phi_k, h, \vx^k, L_{\Phi_k}, \sigma_{\Phi_k}, T_k, m_{0,k}, \bar\eta_k, \delta_k, \Delta_k, \vareps),
\end{equation}
where  
\begin{align}\label{eq:L-sigma-phi-k}
&L_{\Phi_k} = \sqrt{3L_0^2 + 3L_J^2\|\vy^k\|^2 + 3\beta_k^2 L_J^2}, \quad \sigma_{\Phi_k} = \sqrt{3\sigma_g^2 + 3\sigma_c^2 \|\vy^k\|^2 + 3\beta_k^2 \sigma_c^2},\\
&\Delta_k(\vx, \zeta) := \nabla G_0(\vx, \xi_1) + J_\vC(\vx,\xi_1)^\top \vy^k + \beta_k J_\vC(\vx,\xi_1)^\top \vC(\vx,\xi_2). \label{eq:k-th-func}\\
&\bar\eta_k = \frac{\eta_k}{L_{\Phi_k} \sqrt[3]{T_k}},\ \delta_k = \frac{4\eta_k^2+10\eta_k^2(2-\eta_k T_k^{-\frac{1}{3}})}{T_k^{\frac{2}{3}}+4\eta_k^2}, \ m_{0,k} = \lceil c_{0,k} \sqrt[3]{T_k}\rceil \\
&T_k = \left\lceil \frac{ 48^{\frac{3}{2}} 40^{\frac{3}{2}}\left( \frac{L_{\Phi_k}[F_k(\vx^k)-F_k^*]_{1+}}{\eta}+\frac{\sigma_{\Phi_k}^2}{20c_{0,k}\eta_k^2}+\frac{24^2\sigma_{\Phi_k}^2 \eta_k^2}{10} \right)^{\frac{3}{2}} }{\vareps^3} \right\rceil
\end{align}
with
\begin{align}\label{eq:eta-k-c0k}
&\eta_k = \Big(\frac{10}{24^2}\Big)^{\frac{1}{3}}\frac{\big(L_{\Phi_k}[F_k(\vx^k)-F_k^*]_{1+}\big)^{\frac{1}{3}}}{\sigma_{\Phi_k}^{\frac{2}{3}}},\quad c_{0,k} = \Big(\frac{24^2}{10}\Big)^{\frac{1}{3}}\frac{\sigma_{\Phi_k}^{\frac{8}{3}}}{20 \big(L_{\Phi_k}[F_k(\vx^k)-F_k^*]_{1+}\big)^{\frac{4}{3}}}.
\end{align}
In addition, $\cT_k$ calls to the stochastic first-order oracle that is defined in Assumption~\ref{assump:oracle} will be enough to produce $\vx^{k+1}$, where
\begin{equation}\label{eq:cT-k}
\cT_k = O\left(\frac{\sigma_{\Phi_k} L_{\Phi_k}[F_k(\vx^k)-F_k^*]_{1+}}{\vareps^3} + \frac{\sigma_{\Phi_k}^3}{\vareps L_{\Phi_k}} + \frac{\sigma_{\Phi_k}^{\frac{8}{3}}}{L_{\Phi_k}^{\frac{4}{3}}}\right).
\end{equation}
\end{theorem}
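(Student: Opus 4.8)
The plan is to recognize the $k$-th subproblem $\min_\vx F_k(\vx)$ as an instance of the composite problem \eqref{eq:comp-prob-stoc} and then invoke Lemma~\ref{lem:pstorm-comp} with the stochastic gradient estimator furnished by Lemma~\ref{lem:delta}. Since $F_k(\vx) = \cL_{\beta_k}(\vx,\vy^k) = \Phi_k(\vx) + h(\vx)$ with $\Phi_k$ smooth and $h$ closed convex, I would take $G = \Phi_k$, $H = h$, and $\cA = \Delta_k$. The first step is to verify the oracle conditions \eqref{eq:cond-pstorm}: applying Lemma~\ref{lem:delta} with $\vy = \vy^k$ and $\beta = \beta_k$ shows that $\Delta_k$ in \eqref{eq:k-th-func} is unbiased for $\nabla\Phi_k$ and satisfies the mean-squared Lipschitz and bounded-variance bounds with constants exactly $L_{\Phi_k}$ and $\sigma_{\Phi_k}$ as given in \eqref{eq:L-sigma-phi-k}. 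Everything here is carried out conditioned on $\vy^k$, so the expectations are conditional on $\vy^k$; since $\vy^k$ is fixed throughout the $k$-th outer iteration, the analysis of Lemma~\ref{lem:pstorm-comp} applies verbatim to these conditional expectations.

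Next I would check that the tolerance hypothesis of Lemma~\ref{lem:pstorm-comp}, namely $\vareps \le \frac{\sigma_{\Phi_k}}{10}\sqrt{1920}\sqrt{3}\big(\tfrac{24^2}{10}\big)^{1/2}$, is implied by the hypothesis of the theorem. Because $\beta_k = \beta_0\sigma^k \ge \beta_0$, one has $\sigma_{\Phi_k} = \sqrt{3}\sqrt{\sigma_g^2 + \sigma_c^2\|\vy^k\|^2 + \beta_k^2\sigma_c^2} \ge \sqrt{3}\sqrt{\sigma_g^2 + \beta_0^2\sigma_c^2}$, and substituting this lower bound into the lemma's threshold reproduces exactly the condition $\vareps \le \frac{3\sqrt{\sigma_g^2 + \beta_0^2\sigma_c^2}}{10}\sqrt{1920}\big(\tfrac{24^2}{10}\big)^{1/2}$ assumed in the theorem. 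With the tolerance condition confirmed and the parameters $\eta_k,c_{0,k}$ chosen as in \eqref{eq:eta-k-c0k}, which are precisely the substitutions $L_G\mapsto L_{\Phi_k}$, $\sigma_G\mapsto\sigma_{\Phi_k}$, $F(\vx^0)-F^*\mapsto F_k(\vx^k)-F_k^*$ into \eqref{eq:choice-eta-c0}, Lemma~\ref{lem:pstorm-comp} yields $\EE\big[\dist(\vzero,\partial F_k(\vx^{k+1}))^2\,\big|\,\vy^k\big] \le \vareps^2$. Since $F_k = \cL_{\beta_k}(\cdot,\vy^k)$ gives $\partial F_k = \partial_x\cL_{\beta_k}(\cdot,\vy^k)$, this is exactly the subproblem accuracy \eqref{eq:ialm-x-stoc}. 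Here $F_k^*$ is finite because $\dom(h)$ is compact by Assumption~\ref{assump:comp-stoc}, so the minimum is attained.

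For the oracle count, I would observe that each evaluation of $\Delta_k$ draws the two independent samples $\xi_1,\xi_2$ and hence costs a constant number of stochastic first-order oracles; consequently the total oracle count is a constant multiple of $\mathrm{Total}_\cA$ from Lemma~\ref{lem:pstorm-comp} with $L_G = L_{\Phi_k}$ and $\sigma_G = \sigma_{\Phi_k}$. Finally, using $[F_k(\vx^k)-F_k^*]_{1+} \ge 1$ to drop this factor from the denominators of the second and third terms of $\mathrm{Total}_\cA$ produces the (weaker but cleaner) upper bound \eqref{eq:cT-k}. I expect the genuine work to be almost entirely bookkeeping: matching the abstract constants $(L_G,\sigma_G)$ and parameter choices to their subproblem instances and tracking the tolerance condition through the monotonicity $\beta_k \ge \beta_0$. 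The only point requiring real care is the conditional-expectation interpretation, as every guarantee is stated conditional on $\vy^k$, which is exactly the form demanded by \eqref{eq:ialm-x-stoc}, so no tower-property or marginalization argument is needed at this stage.
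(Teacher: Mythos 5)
Your proposal is correct and follows essentially the same route as the paper's proof: instantiate Lemma~\ref{lem:delta} with $(\vy,\beta)=(\vy^k,\beta_k)$ to verify the PStorm oracle conditions with constants $L_{\Phi_k},\sigma_{\Phi_k}$, use $\beta_k\ge\beta_0$ to pass the tolerance threshold of Lemma~\ref{lem:pstorm-comp}, invoke that lemma for the stationarity guarantee \eqref{eq:ialm-x-stoc}, and then use $[F_k(\vx^k)-F_k^*]_{1+}\ge 1$ together with the two-oracles-per-$\Delta_k$-call accounting to obtain \eqref{eq:cT-k}. Your added remarks on the conditional-expectation interpretation and the finiteness of $F_k^*$ via compactness of $\dom(h)$ are details the paper leaves implicit, but they do not change the argument.
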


\begin{proof}
By Lemma~\ref{lem:delta}, it holds
\begin{subequations}\label{eq:cond-delta-k}
\begin{align}
&\EE_\zeta\big[\|\Delta_k(\vx_1, \zeta)-\Delta_k(\vx_2, \zeta)\|^2\big] \le L_{\Phi_k}^2\|\vx_1-\vx_2\|^2,\forall\, \vx_1, \vx_2\in\dom(h), \label{eq:cond-delta-k-1}\\
&\EE_\zeta\big[\Delta_k(\vx,\zeta)\big] = \nabla \Phi_k(\vx), \ \EE\big[\|\Delta_k(\vx,\zeta) - \nabla \Phi_k(\vx)\|^2\big] \le \sigma_{\Phi_k}^2, \forall\, \vx\in \dom(h), \label{eq:cond-delta-k-2}
\end{align}
\end{subequations}
where $L_{\Phi_k}$ and $\sigma_{\Phi_k}$ are given in \eqref{eq:L-sigma-phi-k}. 
Hence, $\vareps \le \frac{\sigma_{\Phi_k}}{10}\sqrt{1920}\sqrt{3}\big(\frac{24^2}{10}\big)^{\frac{1}{2}}$ by $\beta_0\le \beta_k,\forall\, k\ge0$ and the assumed condition on $\vareps$. Thus, from Lemma~\ref{lem:pstorm-comp}, the point $\vx^{k+1}$ returned by Algorithm~\ref{alg:pstorm} with the call in \eqref{eq:k-call-pstorm} is an $\vareps$-stationary point $\vx^{k+1}$ of $F_k(\cdot)$ in expectation, i.e., $\EE \big[\dist(\vzero, \partial_x \cL_{\beta_k}(\vx^{k+1},\vy^k))^2 \, |\, \vy^k\big] \le \vareps^2$ and, the total number of calls to $\Delta_k$ is
\begin{align}\label{eq:oracle-Delta-k}
\mathrm{Total}_{\Delta_k} =  & ~\Theta\left(\frac{\sigma_{\Phi_k} L_{\Phi_k}[F_k(\vx^k)-F_k^*]_{1+}}{\vareps^3} + \frac{\sigma_{\Phi_k}^3}{\vareps L_{\Phi_k}[F_k(\vx^k)-F_k^*]_{1+}} + \frac{\sigma_{\Phi_k}^{\frac{8}{3}}}{\big(L_{\Phi_k}[F_k(\vx^k)-F_k^*]_{1+}\big)^{\frac{4}{3}}}\right)\nonumber \\
= & ~ O\left(\frac{\sigma_{\Phi_k} L_{\Phi_k}[F_k(\vx^k)-F_k^*]_{1+}}{\vareps^3} + \frac{\sigma_{\Phi_k}^3}{\vareps L_{\Phi_k}} + \frac{\sigma_{\Phi_k}^{\frac{8}{3}}}{L_{\Phi_k}^{\frac{4}{3}}}\right),
\end{align}
where we have used $[F_k(\vx^k)-F_k^*]_{1+} \ge 1$ in the second equation. Since each call to $\Delta_k$ will need two stochastic first-order oracles as we assumed in Assumption~\ref{assump:oracle}, we have $\cT_k = 2\cdot \mathrm{Total}_{\Delta_k}$ and thus complete the proof.
\qed\end{proof}

\begin{remark}
In the parameter settings of \eqref{eq:L-sigma-phi-k}-\eqref{eq:eta-k-c0k}, we used the unknown value $F_k^*$. As we point out in Remark~\ref{rm:pstorm-comp}, we can replace $F_k(\vx^k)-F_k^*$ by its upper bound such as the one we establish in \eqref{eq:bd-Fk-Fk} below.
\end{remark}

The lemma below is used to bound $\EE[F_k(\vx^k) - F_k^*]$.

\begin{lemma}\label{lem:bd-Fk-Fbar}
Let $F_k(\cdot)$ and $F_k^*$ be defined in Theorem~\ref{thm:k-th-complexity}. If \eqref{eq:y-bound-stoc} and  \eqref{eq:c-bound} hold, then
\begin{align}\label{eq:bd-Fk-Fk-E-k0}
F_0(\vx^0) - F_0^* \le 2 B_0 + \frac{\beta_0}{2}\|\vc(\vx^0)\|^2,
\end{align}
and for any $k\ge1$,
\begin{equation}\label{eq:bd-Fk-Fk-E-gk}
\begin{aligned}
&~\EE[F_k(\vx^k) - F_k^*] \\
\le & ~ B_F := 2 B_0 + \frac{\vareps^2 + B_0^2 + M^2}{v\beta_0}\Big(\frac{2\sigma}{v}+1\Big) +  + \frac{\gamma_0^2}{2v\beta_0}\Big(\frac{2\sigma}{v}+1\Big)\big(v+2B_c^2\big)\frac{4}{(\ln\sigma)^2\sigma^{\frac{2}{\ln \sigma}}}.
\end{aligned}
\end{equation}
\end{lemma}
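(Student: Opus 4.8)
The plan is to bound the gap $F_k(\vx^k) - F_k^*$ by producing an upper bound on $F_k(\vx^k)$ and a lower bound on $F_k^*$ separately, and then to control the resulting expectation using the two hypotheses \eqref{eq:y-bound-stoc} and \eqref{eq:c-bound}. Recall $F_k(\vx) = f_0(\vx) + (\vy^k)^\top\vc(\vx) + \frac{\beta_k}{2}\|\vc(\vx)\|^2$. For the lower bound I would complete the square, writing $F_k(\vx) = f_0(\vx) + \frac{\beta_k}{2}\|\vc(\vx) + \vy^k/\beta_k\|^2 - \frac{\|\vy^k\|^2}{2\beta_k} \ge -B_0 - \frac{\|\vy^k\|^2}{2\beta_k}$, using $|f_0|\le B_0$ from \eqref{P3-eqn-33-stoc-a}; hence $F_k^* \ge -B_0 - \frac{\|\vy^k\|^2}{2\beta_k}$. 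For the upper bound I keep $F_k(\vx^k) \le B_0 + (\vy^k)^\top\vc(\vx^k) + \frac{\beta_k}{2}\|\vc(\vx^k)\|^2$. Subtracting and recognizing that the cross term recombines into the same perfect square yields the clean intermediate estimate $F_k(\vx^k) - F_k^* \le 2B_0 + \frac{\beta_k}{2}\big\|\vc(\vx^k) + \frac{\vy^k}{\beta_k}\big\|^2$, which reduces the entire problem to bounding one squared term in expectation.

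The crucial step is to split this square by a \emph{weighted} Young's inequality $\|\vc(\vx^k) + \vy^k/\beta_k\|^2 \le (1+\theta)\|\vc(\vx^k)\|^2 + (1+\theta^{-1})\|\vy^k\|^2/\beta_k^2$ with the tuned choice $\theta = \frac{v}{2\sigma}$; this particular value is what makes the factor $(\frac{2\sigma}{v}+1)$ appear in $B_F$, since $1+\theta = 1+\frac{v}{2\sigma}$ and $1+\theta^{-1} = 1+\frac{2\sigma}{v}$. After taking expectations I substitute $\EE[\|\vc(\vx^k)\|^2]$ from \eqref{eq:c-bound} and the deterministic bound $\|\vy^k\|\le k\gamma_0$ from \eqref{eq:y-bound-stoc}. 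Using $\frac{\beta_k}{\beta_{k-1}^2} = \frac{\sigma^{2-k}}{\beta_0}$, the ``constant'' portion of the $\|\vc\|^2$ contribution (the part carrying $\vareps^2 + B_0^2 + M^2$) is decreasing in $k$ and is thus maximized at $k=1$, producing exactly the second summand $\frac{\vareps^2+B_0^2+M^2}{v\beta_0}(\frac{2\sigma}{v}+1)$ of $B_F$.

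The remaining pieces all carry a factor $\gamma_0^2$ together with either $(k-1)^2/\sigma^k$ (from the $B_c^2(k-1)^2\gamma_0^2$ term of \eqref{eq:c-bound}) or $k^2/\sigma^k$ (from $\|\vy^k\|^2/\beta_k$). I would bound these by $\max_{x\ge0} x^2\sigma^{-x}$: differentiating $x^2\sigma^{-x}$ gives the maximizer $x = \frac{2}{\ln\sigma}$ and maximal value $\frac{4}{(\ln\sigma)^2\sigma^{2/\ln\sigma}}$, precisely the factor in the third summand of $B_F$. Collecting the $\gamma_0^2$-terms and factoring out $(\frac{2\sigma}{v}+1)$ and $\frac{\gamma_0^2}{2v\beta_0}$ then yields $\frac{\gamma_0^2}{2v\beta_0}(\frac{2\sigma}{v}+1)(v+2B_c^2)\frac{4}{(\ln\sigma)^2\sigma^{2/\ln\sigma}}$, completing the $k\ge1$ bound. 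The base case \eqref{eq:bd-Fk-Fk-E-k0} is immediate: with $\vy^0 = \vzero$ we have $F_0(\vx) = f_0(\vx) + \frac{\beta_0}{2}\|\vc(\vx)\|^2$, so $F_0^* \ge -B_0$ and $F_0(\vx^0) \le B_0 + \frac{\beta_0}{2}\|\vc(\vx^0)\|^2$.

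The main obstacle I would flag is the calibration of the Young's parameter $\theta = \frac{v}{2\sigma}$ together with the bookkeeping needed to make the constants match exactly rather than up to an absolute factor. The natural symmetric split $\theta = 1$ gives a valid but looser estimate with a spurious factor of two on the dominant $\frac{\sigma}{v^2}$ term; only the asymmetric choice tied to the regularity constant $v$ reproduces $(\frac{2\sigma}{v}+1)$ and lets the $\|\vy^k\|^2$ contribution merge cleanly with the constraint-violation contribution. The other delicate point is that the constant portion is maximized at $k=1$ whereas the quadratic-in-$k$ portions are maximized at the interior point $k = 1+\frac{2}{\ln\sigma}$, so I must verify that bounding each additive piece by its own maximum over $k$ still delivers a single $k$-independent constant $B_F$ valid for all $k\ge1$.
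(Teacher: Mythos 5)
Your proof is correct, and remarkably it reproduces the paper's constant $B_F$ exactly, but the intermediate organization is genuinely different from the paper's. The paper introduces $\hat\vx^k = \argmin_\vx F_k(\vx)$ and applies Young's inequality to $-\langle \vy^k,\vc(\hat\vx^k)\rangle$ to get $F_k(\vx^k)-F_k^* \le 2B_0 + k\gamma_0\|\vc(\vx^k)\| + \frac{\beta_k}{2}\|\vc(\vx^k)\|^2 + \frac{k^2\gamma_0^2}{2\beta_k}$ (that Young step is your completed square on the $\hat\vx^k$ side, in disguise); it then must handle the \emph{first-order} term $k\gamma_0\,\EE\|\vc(\vx^k)\|$ via Jensen's inequality, $\EE\|\vc(\vx^k)\| \le \sqrt{\EE\|\vc(\vx^k)\|^2}$, followed by a second application of Young's inequality to absorb the resulting cross term $2k\gamma_0\sqrt{A}/(v\beta_{k-1})$; the two contributions $\frac{2\sigma}{v^2\beta_0}$ (from \eqref{eq:c-bound}) and $\frac{1}{v\beta_0}$ (from that second Young step) then sum to the factor $\frac{1}{v\beta_0}(\frac{2\sigma}{v}+1)$. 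Your route folds the $\vx^k$-side cross term into the single perfect square $\frac{\beta_k}{2}\|\vc(\vx^k)+\vy^k/\beta_k\|^2$ and splits it once with the weighted Young parameter $\theta = \frac{v}{2\sigma}$, so only $\EE\|\vc(\vx^k)\|^2$ ever appears — no Jensen, no second Young — and the asymmetric weights $(1+\theta,\,1+\theta^{-1})$ deliver the same split of constants in one stroke ($\frac{2(1+\theta)\sigma}{v^2} = \frac{2\sigma+v}{v^2}$ matching the paper's sum). Both arguments then finish identically: bound each additive piece over $k\ge 1$ separately, using monotonicity of $\sigma^{2-k}$ for the $(\vareps^2+B_0^2+M^2)$-part and $\max_{x>0} x^2\sigma^{-x} = \frac{4}{(\ln\sigma)^2\sigma^{2/\ln\sigma}}$ for the $\gamma_0^2$-parts, and summing per-piece maxima is of course valid. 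What your version buys is economy (one calibrated inequality instead of three, and no need to pass through $\EE\|\vc(\vx^k)\|$); what the paper's version buys is that each step is an off-the-shelf unweighted inequality requiring no tuning — the price being the extra Jensen/Young bookkeeping.
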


\begin{proof}
Suppose $\hat\vx^k = \argmin_\vx F_k(\vx)$, i.e., $F_k^* = F_k(\hat\vx^k)$. Then it holds
\begin{align}
F_k(\vx^k) - F_k^* = & ~f_0(\vx^k)  + \langle \vy^k, \vc(\vx^k) \rangle + \frac{\beta_k}{2}\|\vc(\vx^k)\|^2 - f_0(\hat\vx^k)  - \langle \vy^k, \vc(\hat \vx^k) \rangle - \frac{\beta_k}{2}\|\vc(\hat\vx^k)\|^2 \nonumber \\
\le & ~ f_0(\vx^k)  + \langle \vy^k, \vc(\vx^k) \rangle + \frac{\beta_k}{2}\|\vc(\vx^k)\|^2 - f_0(\hat\vx^k) + \frac{1}{2\beta_k}\|\vy^k\|^2 \nonumber\\
\le & ~ 2 B_0  + k\gamma_0 \|\vc(\vx^k)\| + \frac{\beta_k}{2}\|\vc(\vx^k)\|^2 + \frac{k^2\gamma_0^2}{2\beta_k}, \forall\, k \ge 0, \label{eq:bd-Fk-Fk}
\end{align}
where the first inequality is by the Young's inequality, and the second inequality follows from \eqref{P3-eqn-33-stoc-a}. Hence for $k=0$, taking expectation on both sides of \eqref{eq:bd-Fk-Fk} gives \eqref{eq:bd-Fk-Fk-E-k0},
and for $k\ge1$,
\begin{align}\label{eq:bd-Fk-Fk-E}
\EE[F_k(\vx^k) - F_k^*] \le & ~ 2 B_0 + k\gamma_0 \EE\big[\|\vc(\vx^k)\|\big] + \frac{\beta_k}{2}\EE\big[\|\vc(\vx^k)\|^2\big] + \frac{k^2\gamma_0^2}{2\beta_k} \nonumber \\
\le & ~ 2 B_0 + \frac{2k\gamma_0\sqrt{\vareps^2 + B_0^2 + B_c^2 (k-1)^2\gamma_0^2 + M^2}}{v\beta_{k-1}} \nonumber \\
& ~ + \frac{2\beta_k}{v^2 \beta_{k-1}^2} \left(\vareps^2 + B_0^2 + B_c^2 (k-1)^2\gamma_0^2 + M^2\right) + \frac{k^2\gamma_0^2}{2\beta_k} \nonumber \\
\le & ~ 2 B_0 + \frac{k^2\gamma_0^2}{v\beta_{k-1}}  + \left(\frac{2\beta_k}{v^2 \beta_{k-1}^2} + \frac{1}{v\beta_{k-1}} \right)(\vareps^2 + B_0^2 + B_c^2 (k-1)^2\gamma_0^2 + M^2) + \frac{k^2\gamma_0^2}{2\beta_k}, 
\end{align}
where in the second inequality we have used \eqref{eq:y-bound-stoc}, \eqref{eq:c-bound} and the Jensen's inequality, and the third inequality follows from the Young's inequality.

Notice that $\frac{x^2}{\sigma^x}$ attains its maximum at $x = \frac{2}{\ln \sigma}$ for $x>0$. Thus 
\begin{equation}\label{eq:max-ratio}
\frac{x^2}{\sigma^x} \le \frac{4}{(\ln\sigma)^2\sigma^{\frac{2}{\ln \sigma}}},\forall\, x >0.
\end{equation}
Now by $\beta_k = \beta_0\sigma^k$, we have for any $k\ge1$, 
\begin{align*}
&\frac{2\beta_k}{v^2 \beta_{k-1}^2} + \frac{1}{v\beta_{k-1}} = \frac{2\sigma}{v^2 \beta_0 \sigma^{k-1}} + \frac{1}{v\beta_0\sigma^{k-1}} \le \frac{2\sigma}{v^2 \beta_0} + \frac{1}{v\beta_0},\\
&\frac{k^2\gamma_0^2}{v\beta_{k-1}} + \frac{k^2\gamma_0^2}{2\beta_k} = \Big(\frac{\sigma\gamma_0^2}{v\beta_0} + \frac{\gamma_0^2}{2\beta_0}\Big) \frac{k^2}{\sigma^k} \le \Big(\frac{\sigma\gamma_0^2}{v\beta_0} + \frac{\gamma_0^2}{2\beta_0}\Big) \frac{4}{(\ln\sigma)^2\sigma^{\frac{2}{\ln \sigma}}},\\
&\left(\frac{2\beta_k}{v^2 \beta_{k-1}^2} + \frac{1}{v\beta_{k-1}} \right) B_c^2 (k-1)^2\gamma_0^2 \le B_c^2\gamma_0^2\Big(\frac{2\sigma}{v^2 \beta_0} + \frac{1}{v\beta_0}\Big) \frac{4}{(\ln\sigma)^2\sigma^{\frac{2}{\ln \sigma}}}.
\end{align*}
Plugging the three inequalities above into \eqref{eq:bd-Fk-Fk-E} and using the definition of $B_F$, we obtain \eqref{eq:bd-Fk-Fk-E-gk}.
\qed\end{proof}

By Theorem~\ref{thm:k-th-complexity} and Lemma~\ref{lem:bd-Fk-Fbar}, we are ready to show the overall oracle complexity of Algorithm~\ref{alg:ialm-stoc} to produce an $\vareps$-KKT point of \eqref{eq:ncp-stoc} in expectation.

	\begin{theorem}[Overall complexity of Algorithm~\ref{alg:ialm-stoc}]\label{thm:ialm-compl-exp-1}
		Under Assumptions~\ref{assump:oracle} through \ref{assump:var}, let $\vareps\in(0,1)$ be a given tolerance. Suppose $\vareps \le \frac{3\sqrt{\sigma_g^2 + \beta_0^2\sigma_c^2} }{10}\sqrt{1920}\big(\frac{24^2}{10}\big)^{\frac{1}{2}}$. In Algorithm~\ref{alg:ialm-stoc}, set $M_k=\Theta(\vareps^{-2})$ and $\gamma_k = \gamma_0,\forall\, k\ge0$ for some $\gamma_0 > 0$ such that  
$\frac{\sqrt{8} B_c \gamma_0}{\beta_0 v \vareps} \ge \frac{8}{\ln\sigma}$. Then it can produce an $\vareps$-KKT point of \eqref{eq:ncp-stoc} in expectation, by using Algorithm~\ref{alg:pstorm} as the subroutine and calling it via \eqref{eq:k-call-pstorm}. In addition, the total number $\mathrm{Oracle}_{\mathrm{total}}$ of calls to the oracle defined in Assumption~\ref{assump:oracle} satisfies 
		$$\EE\big[\mathrm{Oracle}_{\mathrm{total}}\big] = O(\vareps^{-5}).$$
	\end{theorem}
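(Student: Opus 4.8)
The idea is to sum the per-outer-iteration oracle cost $\cT_k$ from Theorem~\ref{thm:k-th-complexity} over the $K$ outer iterations, where $K = O(\log_\sigma \tfrac{1}{\vareps})$ by Theorem~\ref{thm:stoc-ialm-conv}, and then add the $M_k = \Theta(\vareps^{-2})$ samples used to form $\tilde\vc(\vx^{k+1})$ in the dual update \eqref{eq:alm-y-stoc}. First I would bound the growth of the subroutine parameters $L_{\Phi_k}$ and $\sigma_{\Phi_k}$ from \eqref{eq:L-sigma-phi-k} in terms of $\beta_k$ and $\|\vy^k\|$. Using $\|\vy^k\| \le k\gamma_0$ from \eqref{eq:y-bound-stoc} and $\beta_k = \beta_0\sigma^k$, and recalling $k \le K = O(\ln\tfrac1\vareps)$, the term $\beta_k^2$ dominates $\|\vy^k\|^2$ for large $k$ (since $\sigma^{2k}$ beats $k^2$), so one gets $L_{\Phi_k} = O(\beta_k)$ and $\sigma_{\Phi_k} = O(\beta_k)$. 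Combined with the bound $\EE[F_k(\vx^k)-F_k^*] \le B_F = O(1)$ from Lemma~\ref{lem:bd-Fk-Fbar} (here I would invoke the Remark after Theorem~\ref{thm:k-th-complexity} to replace the unknown $F_k^*$ by this explicit upper bound $B_F$ in the parameter choices), the dominant term of $\cT_k$ in \eqref{eq:cT-k} becomes $O\!\big(\tfrac{\sigma_{\Phi_k} L_{\Phi_k} B_F}{\vareps^3}\big) = O\!\big(\tfrac{\beta_k^2}{\vareps^3}\big)$, while the other two terms $\tfrac{\sigma_{\Phi_k}^3}{\vareps L_{\Phi_k}}$ and $\sigma_{\Phi_k}^{8/3}/L_{\Phi_k}^{4/3}$ are of lower order in $\beta_k$ (namely $O(\beta_k^2/\vareps)$ and $O(\beta_k^{4/3})$).

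\textbf{Main steps.} The plan proceeds as follows.
\begin{enumerate}
\item Take total expectation of $\cT_k$; since $L_{\Phi_k}, \sigma_{\Phi_k}$ depend on the random $\|\vy^k\|$, I would use the deterministic bound $\|\vy^k\| \le k\gamma_0$ to make $L_{\Phi_k}, \sigma_{\Phi_k}$ deterministic, and use $\EE[F_k(\vx^k)-F_k^*]\le B_F$ to handle the objective gap. Because $[\,\cdot\,]_{1+}\ge 1$ decouples the gap factor, the expectation passes through cleanly.
\item Establish $\EE[\cT_k] = O(\beta_k^2\,\vareps^{-3}) = O(\beta_0^2\sigma^{2k}\vareps^{-3})$, keeping $B_F, v, \sigma, \gamma_0$ as constants absorbed into the big-$O$.
\item Sum over $k=0,\dots,K-1$. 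The geometric sum $\sum_{k=0}^{K-1}\sigma^{2k} = \tfrac{\sigma^{2K}-1}{\sigma^2-1} = O(\sigma^{2K})$ is dominated by its last term.
\item Evaluate $\sigma^{2K}$ using $K$ from \eqref{eq:K-stoc}. Since $K = \lceil \log_\sigma(\Theta(\vareps^{-1}))\rceil + O(1)$, one has $\sigma^{K} = \Theta(\vareps^{-1})$ and hence $\sigma^{2K} = \Theta(\vareps^{-2})$.
\item Conclude $\sum_k \EE[\cT_k] = O(\vareps^{-2}\cdot\vareps^{-3}) = O(\vareps^{-5})$, and add the dual-update cost $\sum_{k=0}^{K-1} M_k = O(K\vareps^{-2}) = \tilde O(\vareps^{-2})$, which is lower order.
\end{enumerate}

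\textbf{Main obstacle.} The delicate point is step~4, turning the logarithmic bound on $K$ into the precise polynomial factor $\sigma^{2K} = \Theta(\vareps^{-2})$. From \eqref{eq:K-stoc}, $K$ is the max of two ceilings plus one; the first is $\lceil\log_\sigma\tfrac{\sqrt8\sqrt{\vareps^2+B_0^2+M^2}}{\beta_0 v\vareps}\rceil$ and the second is $\lceil 2\log_\sigma\tfrac{\sqrt8 B_c\gamma_0}{\beta_0 v\vareps}\rceil$, so I must verify that $\beta_K = \beta_0\sigma^K$ satisfies $\beta_K = \Theta(\vareps^{-1})$ (up to the constant $\sigma$ from the ceiling and the $+1$) for whichever branch achieves the maximum. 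The ceiling functions introduce a bounded multiplicative slack of at most $\sigma$ each, which is harmless since $\sigma$ is a fixed constant, so $\beta_K = O(\vareps^{-1})$ and thus $\beta_k^2 = O(\vareps^{-2})$ for all $k\le K$; this is exactly what feeds the $O(\vareps^{-5})$ bound. A secondary technical care is needed in step~2 to confirm the lower-order terms of $\cT_k$ do not accumulate to something larger than $O(\vareps^{-5})$ after summation — but since each is strictly smaller in its $\beta_k$-power or its $\vareps$-power, their geometric sums remain $O(\vareps^{-5})$ or better.
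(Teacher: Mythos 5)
Your overall route coincides with the paper's own proof: bound $L_{\Phi_k},\sigma_{\Phi_k}$ deterministically via $\Vert \vy^k\Vert\le k\gamma_0$ and $\beta_k=\beta_0\sigma^k$, invoke Lemma~\ref{lem:bd-Fk-Fbar} to control $\EE[F_k(\vx^k)-F_k^*]$, sum the geometric series $\sum_{k=0}^{K-1}\sigma^{2k}=O(\sigma^{2K})$, and add the $\sum_k M_k$ cost. However, there is a genuine flaw precisely at the step you flag as the main obstacle, and your resolution of it fails. The ceilings and the $+1$ in \eqref{eq:K-stoc} are indeed harmless, but the factor $2$ multiplying the logarithm in the second branch is not: if the maximum in \eqref{eq:K-stoc} is attained by $\left\lceil 2\log_\sigma \frac{\sqrt{8} B_c \gamma_0}{\beta_0 v \vareps} \right\rceil$, then $\sigma^K=\Theta\left(\left(\frac{\sqrt{8} B_c \gamma_0}{\beta_0 v \vareps}\right)^2\right)$, i.e. $\beta_K=\Theta(\gamma_0^2\vareps^{-2})$, \emph{not} $\Theta(\vareps^{-1})$. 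Under your stated convention that $\gamma_0$ is an $\vareps$-independent constant absorbed into the big-$O$ (your step 2), the second branch dominates the first for small $\vareps$, so $\sigma^{2K}=\Theta(\vareps^{-4})$ and your argument only yields $O(\vareps^{-7})$, not the claimed $O(\vareps^{-5})$. Steps 2 and 4 of your plan are therefore mutually inconsistent.

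The repair is to note that the hypothesis $\frac{\sqrt{8} B_c \gamma_0}{\beta_0 v \vareps} \ge \frac{8}{\ln\sigma}$ is only a \emph{lower} bound on $\gamma_0$, so one may (and must) choose $\gamma_0=\Theta(\vareps)$, e.g. proportional to $\vareps$ with the smallest admissible constant. Then $\frac{\sqrt{8} B_c \gamma_0}{\beta_0 v \vareps}=\Theta(1)$, the second branch of \eqref{eq:K-stoc} is $O(1)$, the first branch dominates, and $\sigma^{K}=\Theta(\vareps^{-1})$; this is exactly what feeds $\sigma^{2K}/\vareps^{3}=O(\vareps^{-5})$ (any $\gamma_0=O(\sqrt{\vareps})$ would also do). To be fair, the paper's own proof is silent on this point — it simply ``plugs in $K$'' — but its conclusion is only valid under such a scaling of $\gamma_0$; your write-up makes the incompatible choices explicit and so must be corrected as above. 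A secondary, minor point: to pass the expectation through the first term of \eqref{eq:cT-k} you need $[a]_{1+}\le a+1$ for $a\ge 0$, giving $\EE\big[\sigma_{\Phi_k}L_{\Phi_k}[F_k(\vx^k)-F_k^*]_{1+}\big]\le \sigma_{\Phi_k}L_{\Phi_k}\big(\EE[F_k(\vx^k)-F_k^*]+1\big)$ with $\sigma_{\Phi_k}L_{\Phi_k}$ replaced by their deterministic upper bounds; the property $[\cdot]_{1+}\ge 1$ that you cite only serves to drop the gap from the \emph{denominators}, and by itself the inequality goes the wrong way for the numerator term.
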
 

\begin{proof}
From Theorem~\ref{thm:stoc-ialm-conv}, we know that $\vx^K$ is  an $\vareps$-KKT point of \eqref{eq:ncp-stoc} in expectation, where $K$ is given in \eqref{eq:K-stoc}.
Hence, the oracle complexity $\mathrm{Oracle}_{\mathrm{total}}$ of Algorithm~\ref{alg:ialm-stoc} is upper bounded by $\sum_{k=0}^{K-1}(\cT_k + M_k)$ with $\cT_k$ defined in \eqref{eq:cT-k}. To upper bound $\EE\big[\mathrm{Oracle}_{\mathrm{total}}\big]$, it suffices to upper bound $\EE\big[\cT_k\big]$ for each $k < K$.

By $\beta_k = \beta_0\sigma^k$ and \eqref{eq:y-bound-stoc}, we have from \eqref{eq:L-sigma-phi-k} that 
\begin{equation}\label{eq:boud-L-Phi-k}
\sqrt{3}\beta_0 L_J \sigma^{k} \le L_{\Phi_k} \le \sqrt{3L_0^2 + 3L_J^2 k^2 \gamma_0^2 + 3\beta_0^2 L_J^2 \sigma^{2k}} \le \sqrt{3}\big(L_0 + L_J k\gamma_0 + \beta_0 L_J\sigma^k\big).
\end{equation}
and
\begin{equation}\label{eq:boud-sigma-Phi-k}
\sigma_{\Phi_k} \le \sqrt{3\sigma_g^2 + 3\sigma_c^2 k^2 \gamma_0^2 + 3\beta_0^2 \sigma_c^2 \sigma^{2k}} \le \sqrt{3}\big(\sigma_g + \sigma_c k\gamma_0 + \beta_0\sigma_c \sigma^k\big).
\end{equation}
Hence,
\begin{align}\label{eq:bd-term2}
&~\frac{\sigma_{\Phi_k}^3}{\vareps L_{\Phi_k}} + \frac{\sigma_{\Phi_k}^{\frac{8}{3}}}{L_{\Phi_k}^{\frac{4}{3}}} = \frac{\sigma_{\Phi_k}}{\vareps}\frac{\sigma_{\Phi_k}^2}{L_{\Phi_k}} + \Big(\frac{\sigma_{\Phi_k}^{2}}{L_{\Phi_k}} \Big)^{\frac{4}{3}} \nonumber \\
\le & ~  \frac{\sqrt{3}\big(\sigma_g + \sigma_c k\gamma_0 + \beta_0\sigma_c \sigma^k\big)}{\vareps}\cdot\frac{3\sigma_g^2 + 3\sigma_c^2 k^2 \gamma_0^2 + 3\beta_0^2 \sigma_c^2 \sigma^{2k}}{\sqrt{3}\beta_0 L_J \sigma^{k}} + \Big(\frac{3\sigma_g^2 + 3\sigma_c^2 k^2 \gamma_0^2 + 3\beta_0^2 \sigma_c^2 \sigma^{2k}}{\sqrt{3}\beta_0 L_J \sigma^{k}}\Big)^{\frac{4}{3}}. 
\end{align}
By \eqref{eq:max-ratio}, it holds $\frac{3\sigma_c^2 k^2 \gamma_0^2}{\sqrt{3}\beta_0 L_J \sigma^{k}} \le \frac{3\sigma_c^2 \gamma_0^2}{\sqrt{3}\beta_0 L_J} \frac{4}{(\ln\sigma)^2\sigma^{\frac{2}{\ln \sigma}}}$ and $k\le \sigma^k \frac{4}{(\ln\sigma)^2\sigma^{\frac{2}{\ln \sigma}}}$. Hence, 
\begin{equation}\label{eq:order-sigma-k}
\sqrt{3}\big(\sigma_g + \sigma_c k\gamma_0 + \beta_0\sigma_c \sigma^k\big) = \Theta(\sigma^k),\quad \frac{3\sigma_g^2 + 3\sigma_c^2 k^2 \gamma_0^2 + 3\beta_0^2 \sigma_c^2 \sigma^{2k}}{\sqrt{3}\beta_0 L_J \sigma^{k}} = \Theta(\sigma^k).
\end{equation}
Therefore, \eqref{eq:bd-term2} implies
\begin{align}\label{eq:bd-term2-2}
&~\sum_{k=0}^{K-1}\left(\frac{\sigma_{\Phi_k}^3}{\vareps L_{\Phi_k}} + \frac{\sigma_{\Phi_k}^{\frac{8}{3}}}{L_{\Phi_k}^{\frac{4}{3}}} \right) = O\left(\sum_{k=0}^{K-1} \Big(\frac{\sigma^{2k}}{\vareps} + \sigma^{\frac{4k}{3}}\Big)\right) = O\left(\frac{\sigma^{2K}}{\vareps}\right)
\end{align}

In addition, $\EE\left[\sigma_{\Phi_k} L_{\Phi_k}[F_k(\vx^k)-F_k^*]_{1+} \right] \le \EE\left[\sigma_{\Phi_k} L_{\Phi_k}(F_k(\vx^k)-F_k^*) \right] + \sigma_{\Phi_k} L_{\Phi_k}$. Hence, from Lemma~\ref {lem:bd-Fk-Fbar}, \eqref{eq:boud-L-Phi-k}, and \eqref{eq:boud-sigma-Phi-k}, we obtain
\begin{align*}
&~\EE\left[\sigma_{\Phi_k} L_{\Phi_k}[F_k(\vx^k)-F_k^*]_{1+} \right] \\
\le & ~ 3\big(L_0 + L_J k\gamma_0 + \beta_0 L_J\sigma^k\big)\big(\sigma_g + \sigma_c k\gamma_0 + \beta_0\sigma_c \sigma^k\big)\left(1+\max\left\{B_F, 2 B_0 + \frac{\beta_0}{2}\|\vc(\vx^0)\|^2\right\}\right),
\end{align*}
which together with \eqref{eq:order-sigma-k} gives
\begin{align}\label{eq:bd-term1}
\sum_{k=0}^{K-1}\EE\left[\sigma_{\Phi_k} L_{\Phi_k}[F_k(\vx^k)-F_k^*]_{1+} \right] = O\left(\sum_{k=0}^{K-1}\sigma^{2k}\right) = O(\sigma^{2K}).
\end{align}

Therefore, from \eqref{eq:cT-k}, \eqref{eq:bd-term2-2}, and \eqref{eq:bd-term1}, it follows that $\EE\big[\sum_{k=0}^{K-1}\cT_k\big] = O\left(\frac{\sigma^{2K}}{\vareps^3}\right)$. Plugging $K$ given in \eqref{eq:K-stoc}, we have $\EE\big[\sum_{k=0}^{K-1}\cT_k\big] = O(\vareps^{-5})$. Since $M_k=O(\vareps^{-2})$ for all $k$, we obtain $\EE\big[\mathrm{Oracle}_{\mathrm{total}}\big]=\EE\big[\sum_{k=0}^{K-1}(\cT_k + M_k)\big]=O(\vareps^{-5})$ and complete the proof.
\qed\end{proof}


\section{Numerical Results}
In this section, we demonstrate the numerical performance of the proposed Stoc-iALM in Algorithm \ref{alg:ialm-stoc} (with PStorm in Algorithm~\ref{alg:pstorm} as the subroutine) on solving a fairness constrained problem and a Neyman-Pearson Classification problem. We compare it to the IPC method in \cite{ma2019proximally} that achieves the state-of-the-art complexity for solving \eqref{eq:ncp-stoc}.   
All the tests were performed in MATLAB 2019b on a Macbook Pro with 4 cores and 16GB memory. 


\subsection{Nonconvex Fairness Constrained Problem}\label{subsec:num_fair} 
Let $\vx$ denote the parameters of a linear model and $f(\vx; \va,b) = \phi_{\alpha}(l(\vx; \va,b))$ be the truncated logistic loss function, where $l(\vx;\va,b) = \log (1 + \exp (-b\va^{\top}\vx)), \phi_{\alpha}(s) = \alpha \log (1+\frac{s}{\alpha})$, and $\alpha = 2$ is set in our tests. Suppose there is a labeled dataset $D = \{(\va_i,b_i)\}_{i=1}^{|D|}$, a possibly unlabeled dataset $S = \{\va_j\}_{j=1}^{|S|}$, and a subset $S_{\min} \subseteq S$ of the minority population in $S$. Then the problem of training $\vx$ using the loss $f(\vx; \va,b)$ with a fairness constraint \cite{ma2019proximally} can be formulated as
\begin{equation}\label{eq:fair}
	\begin{aligned}
	\min_{\vx \in \RR^d} & \ f_0(\vx):= \frac{1}{|D|} \sum_{(\va,b) \in D} f(\vx;\va,b) , \\ 
	\st & \ f_1(\vx):= c\sum_{\va \in S} \sigma(\va^{\top}\vx) - \sum_{\va \in S_{\min}} \sigma(\va^{\top}\vx) \le 0,
	\end{aligned}
\end{equation}
	where $c \in (0,1)$ is a fairness parameter and $\sigma(s) = \frac{\exp(s)}{1+\exp(s)}$. The fairness constraint above aims at forcing the classifier to have a positive prediction on the minority group often enough.
Following \cite{ma2019proximally}, we use three data sets: \emph{bank-marketing}  from UCI repository \cite{Dua:2019} (shortened as \emph{bank} below) with $d = 81$ and $(|D|,|S|,|S_{\min}|) = (22605, 22605, 233)$, \emph{a9a} from LIBSVM library \cite{chang2011libsvm} with $d = 123$ and $(|D|,|S|,|S_{\min}|) = (32561, 16281, 1561)$, and \emph{loan} from LendingClub (which contains the information of $128375$ loans issued in the fourth quarter of 2018; see \cite{ma2019proximally} for more description) with $d = 250$ and $(|D|,|S|,|S_{\min}|) = (64485, 63890, 31966)$.  
We set the fairness parameter to $c = 0.4$ for the \emph{bank} dataset, $c = 0.1$ for the \emph{a9a} dataset, and $c = 0.6$ for the \emph{loan} dataset. 
	
To solve \eqref{eq:fair} by 
the proposed Stoc-iALM,  
we reformulate its inequality constraint to an equality constraint $f_1(\vx) + \vv = 0$ where $\vv \ge 0$ is enforced. Notice that the reformulation has equivalent stationarity conditions to the original model \eqref{eq:fair} as shown in \cite{li2021rate}. 
The IPC method in \cite{ma2019proximally} is applied directly to \eqref{eq:fair}, and following \cite{ma2019proximally}, we adopt its deterministic version. 
We only compare to the IPC, as it is demonstrated in \cite{ma2019proximally} to outperform other methods on solving \eqref{eq:fair} such as the Penalty with trust region method in \cite{cartis2011evaluation} and the subgradient method in \cite{yu2017online}. 

The tolerance is set to $\vareps = 0.01$ in all tests. Our proposed Stoc-iALM is terminated if both primal and dual residuals  
(see Definition \ref{def:eps-kkt-stoc}) 
of the equality-constrained reformulation of \eqref{eq:fair} are below $\vareps$. The IPC method  does not generate a dual iterate, so for a fair comparison, we compute an optimal dual variable $\vz \ge 0$ that minimizes the squared sum of the violation to the dual feasibility and the complementary slackness conditions 
of \eqref{eq:fair}:
\begin{equation}\label{eq:opt-dual}
\min_{\vz \ge 0} \|\nabla f_0(\vx) + J_{f_1}(\vx)^\top  \vz\|^2 + |\vz^\top f_1(\vx)|^2.
\end{equation}
Since $f_1$ is a scalar function in \eqref{eq:fair}, it is not difficult to have the optimal $\vz = \left[ -\frac{\nabla f_0(\vx)^\top \nabla f_1(\vx)}{f_1(\vx)^2 + \Vert \nabla f_1(\vx) \Vert^2} \right]_+$. 
Given this $\vz$, the IPC is terminated if both primal residual $\Vert [f_1(\vx)]_+ \Vert$ and dual residual $\|\nabla f_0(\vx) + \vz \nabla f_1(\vx) \|$ are below the given tolerance $\vareps$. 
For Stoc-iALM, at the $k$-th outer iteration, we set 
$\beta_k = 2.5^k$ and the smoothness parameter to $10 + \beta_k$, and we set $\tilde\vc(\vx^{k+1}) = \vc(\vx^{k+1})$ in the $\vy$-update \eqref{eq:alm-y-stoc}. In the PStorm subroutine, we set the mini-batch size to $30$ for all three data sets. The parameter settings of the IPC exactly follow from the code of \cite{ma2019proximally} that was kindly provided by the authors. The primal and dual residuals are recorded after every 50 inner iterations for Stoc-iALM, and after every data pass for IPC. Both methods start from a zero vector. As the proposed method is randomized, we run it for 10 independent trials by using different random seeds, while IPC is deterministic and thus we only perform one trial. 

Table \ref{table:fair_10seeds} lists the violation of primal feasibility and the violation to the dual feasibility at the produced $\vareps$-KKT point, and the number of data passes (shortened by \verb|pres|, \verb|dres| and \verb|#data| respectively) for each method to produce such a point.
Figure \ref{fig:fair_10seeds} plots the curves of the constraint function value and \verb|dres| at generated iterates by the proposed Stoc-iALM and the IPC, where the solid red curve shows the average results and the shadow area represents the standard deviation for the proposed method. To clearly show the difference of the results by the proposed method and IPC, we only plot the curves by IPC up to 20 number of data passes for the \emph{bank} and \emph{a9a} data sets. From the results in Table \ref{table:fair_10seeds} and Figure \ref{fig:fair_10seeds}, we see that both methods can reduce \verb|pres| and \verb|dres| below the given tolerance. However, the IPC needs significantly more data passes, especially for the \emph{bank} and \emph{a9a} data sets. In addition, the proposed method can perform well for all 10 trials.

\begin{table}\caption{The violation of primal feasibility and the violation to the dual feasibility at the produced $\vareps$-KKT point with $\vareps=10^{-2}$, and the number of data passes (shortened by pres, dres and \#data$^\dagger$ respectively) of 10 trials with different random seeds by the proposed Stoc-iALM and the IPC in \cite{ma2019proximally} on solving the fairness constrained problem \eqref{eq:fair} with \emph{a9a}, \emph{bank}, and \emph{loan} data sets (from left to right).\\[0.2cm]
$^\dagger$ \#data by Stoc-iALM are fractional because the subroutine PStorm uses minibatch of data points to compute sample gradients and we record pres and dres after every 50 inner iterations.}\label{table:fair_10seeds} 
\begin{center}
\resizebox{1 \textwidth}{!}{ 
\begin{tabular}{|c||c|c|c|} 
\hline
method [\emph{a9a}] & pres & dres & \#data \\\hline\hline 
Stoc-iALM (1) & 5.7e-3 & 9.3e-3 & 3.83  \\ 
Stoc-iALM (2) & 5.8e-3 & 9.5e-3 & 3.90  \\ 
Stoc-iALM (3) & 5.7e-3 & 9.4e-3 & 3.97  \\ 
Stoc-iALM (4) & 6.0e-3 & 1.00e-2 & 3.63  \\ 
Stoc-iALM (5) & 5.8e-3 & 9.2e-3 & 3.83  \\ 
Stoc-iALM (6) & 5.7e-3 & 9.9e-3 & 3.56  \\ 
Stoc-iALM (7) & 5.6e-3 & 9.5e-3 & 3.69  \\ 
Stoc-iALM (8) & 5.7e-3 & 9.3e-3 & 3.90  \\ 
Stoc-iALM (9) & 5.5e-3 & 8.6e-3 & 4.11  \\ 
Stoc-iALM (10) & 5.7e-3 & 9.9e-3 & 4.46  \\ 
\hline
IPC & 0 & 9.1e-3 & 111  \\\hline 
\end{tabular}

\begin{tabular}{|c||c|c|c|} 
\hline
method [\emph{bank}] & pres & dres & \#data \\\hline\hline 
Stoc-iALM (1) & 4.1e-3 & 9.7e-3 & 4.78  \\ 
Stoc-iALM (2) & 4.1e-3 & 9.9e-3 & 4.72  \\ 
Stoc-iALM (3) & 4.2e-3 & 1.00e-2 & 4.72  \\ 
Stoc-iALM (4) & 4.2e-3 & 9.9e-3 & 4.72  \\ 
Stoc-iALM (5) & 4.1e-3 & 9.5e-3 & 4.85  \\ 
Stoc-iALM (6) & 4.1e-3 & 1.00e-2 & 4.72  \\ 
Stoc-iALM (7) & 4.1e-3 & 9.8e-3 & 4.72  \\ 
Stoc-iALM (8) & 4.1e-3 & 9.9e-3 & 4.72  \\ 
Stoc-iALM (9) & 4.0e-3 & 9.9e-3 & 4.72  \\ 
Stoc-iALM (10) & 4.1e-3 & 9.8e-3 & 4.72  \\ 
\hline
IPC & 0 & 9.7e-3 & 107  \\\hline 
\end{tabular}

\begin{tabular}{|c||c|c|c|} 
\hline
method [\emph{loan}] & pres & dres & \#data \\\hline\hline 
Stoc-iALM (1) & 2.2e-4 & 9.1e-3 & 1.21  \\ 
Stoc-iALM (2) & 2.6e-4 & 9.7e-3 & 1.23  \\ 
Stoc-iALM (3) & 2.3e-4 & 9.3e-3 & 1.21  \\ 
Stoc-iALM (4) & 2.4e-4 & 9.6e-3 & 1.21  \\ 
Stoc-iALM (5) & 1.9e-4 & 8.3e-3 & 1.21  \\ 
Stoc-iALM (6) & 2.5e-4 & 9.2e-3 & 1.23  \\ 
Stoc-iALM (7) & 2.1e-4 & 9.0e-3 & 1.21  \\ 
Stoc-iALM (8) & 2.6e-4 & 9.8e-3 & 1.21  \\ 
Stoc-iALM (9) & 2.0e-4 & 8.8e-3 & 1.23  \\ 
Stoc-iALM (10) & 2.5e-4 & 1.00e-2 & 1.21  \\ 
\hline
IPC & 0 & 2.8e-3 & 3  \\\hline 
\end{tabular}
}
\end{center}
\end{table}

\begin{figure}[t] 
	\begin{center}
		\begin{tabular}{cc} 
\multicolumn{2}{c}{{\Large\emph{bank} dataset}}  \\
    \includegraphics[width=0.35\textwidth]{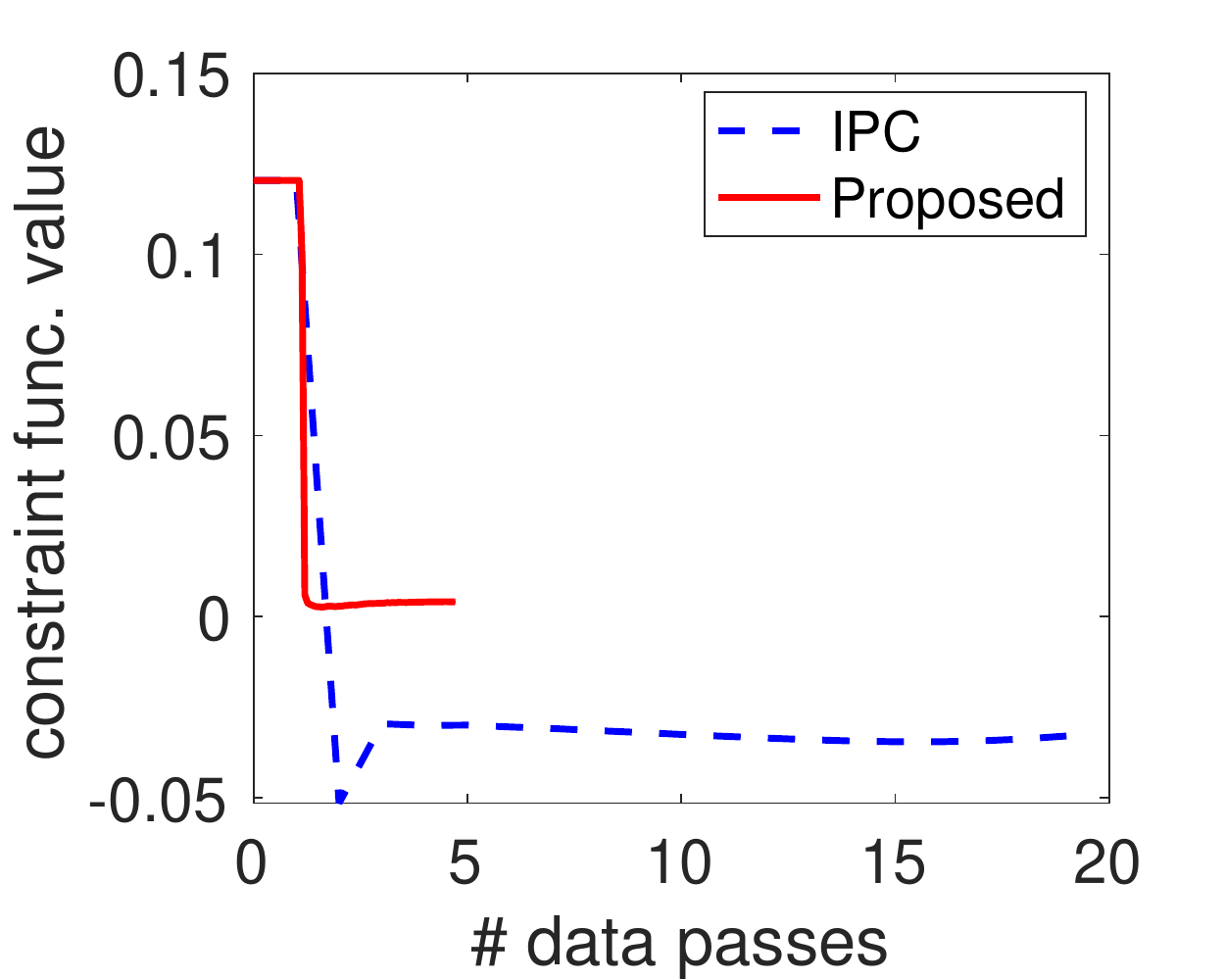} & 
			\includegraphics[width=0.35\textwidth]{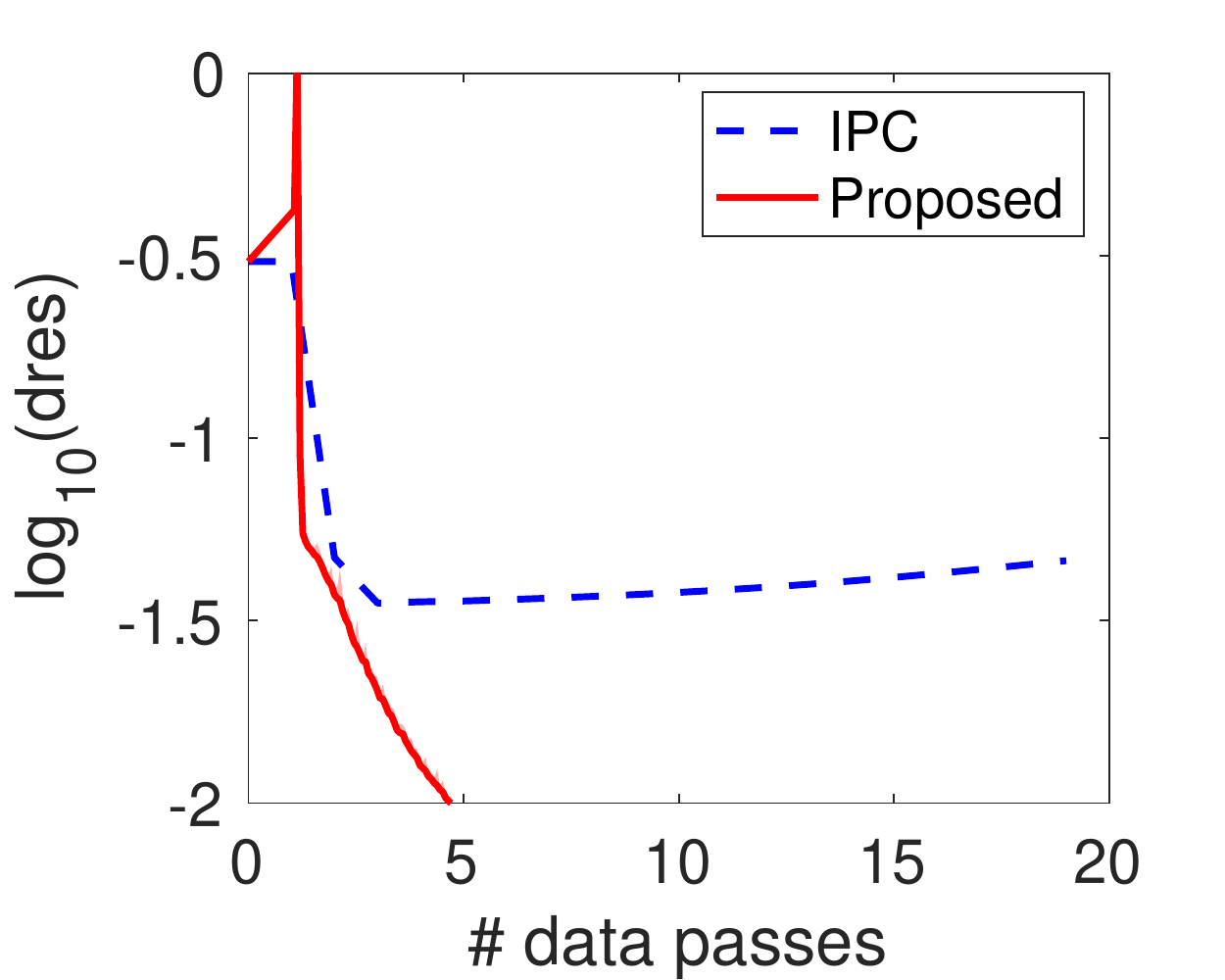} \\[0.4cm]
			\multicolumn{2}{c}{{\Large\emph{a9a} dataset}}\\
	\includegraphics[width=0.35\textwidth]{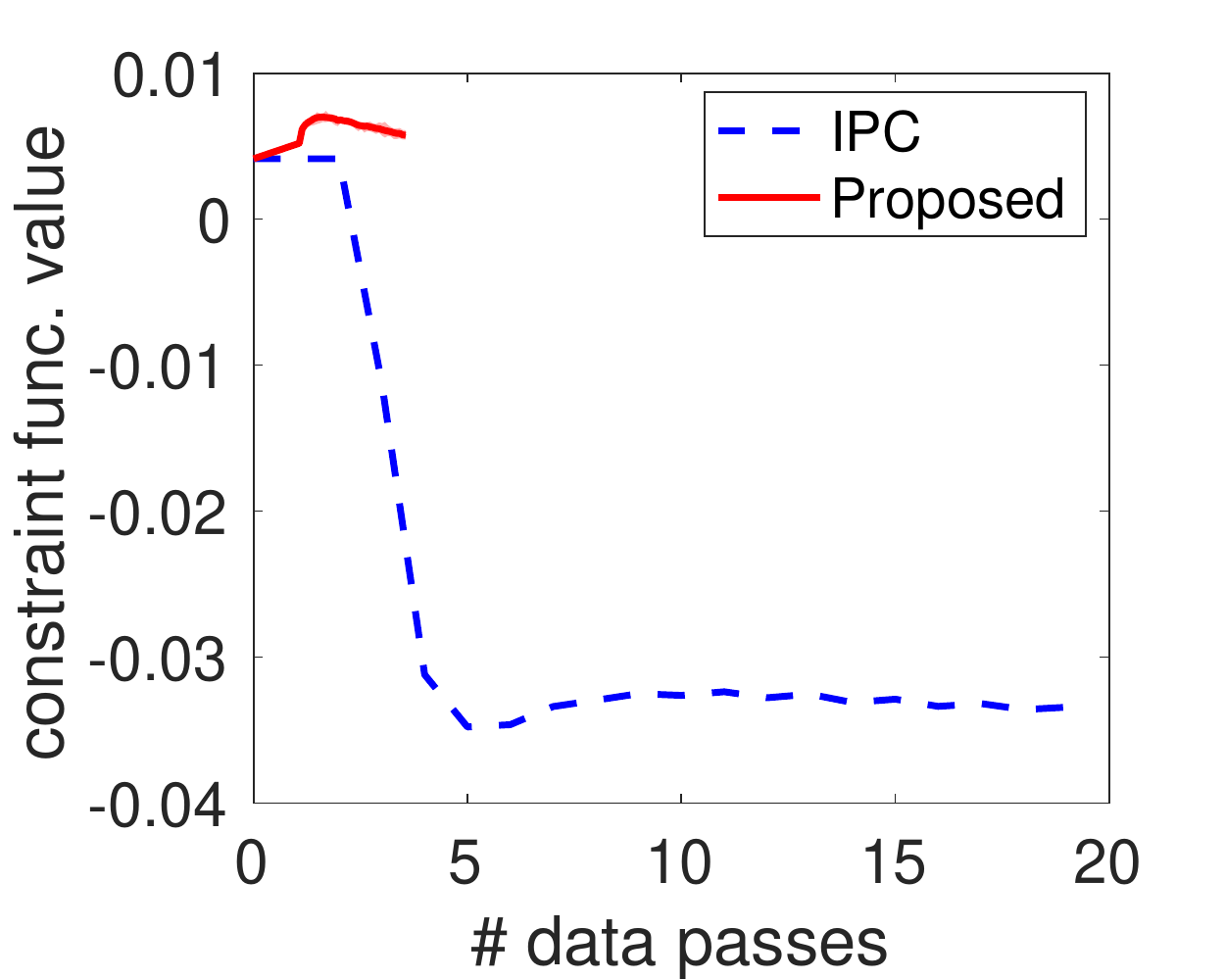} & 
			\includegraphics[width=0.35\textwidth]{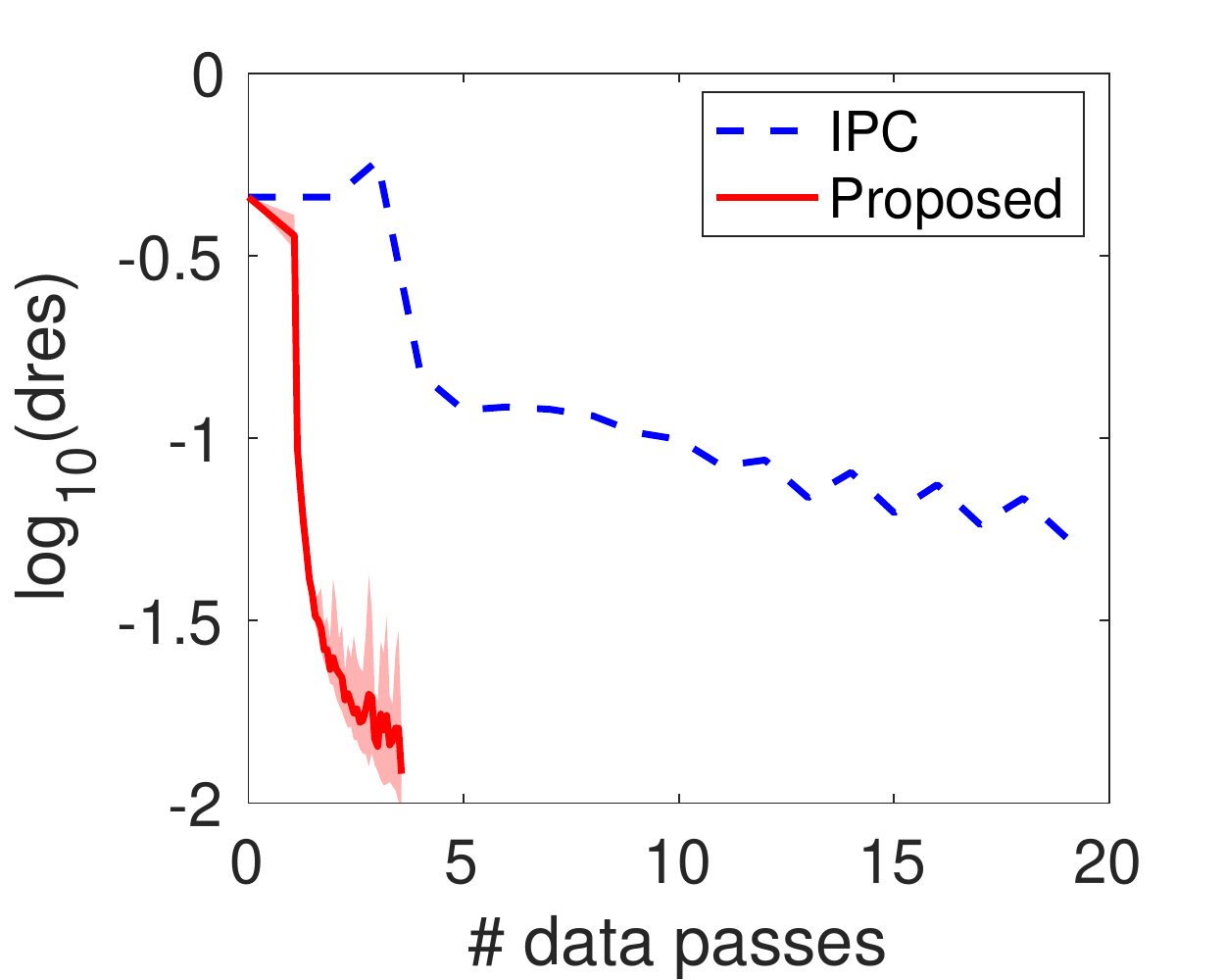} \\[0.4cm]
			\multicolumn{2}{c}{{\Large\emph{loan} dataset}}\\
	\includegraphics[width=0.35\textwidth]{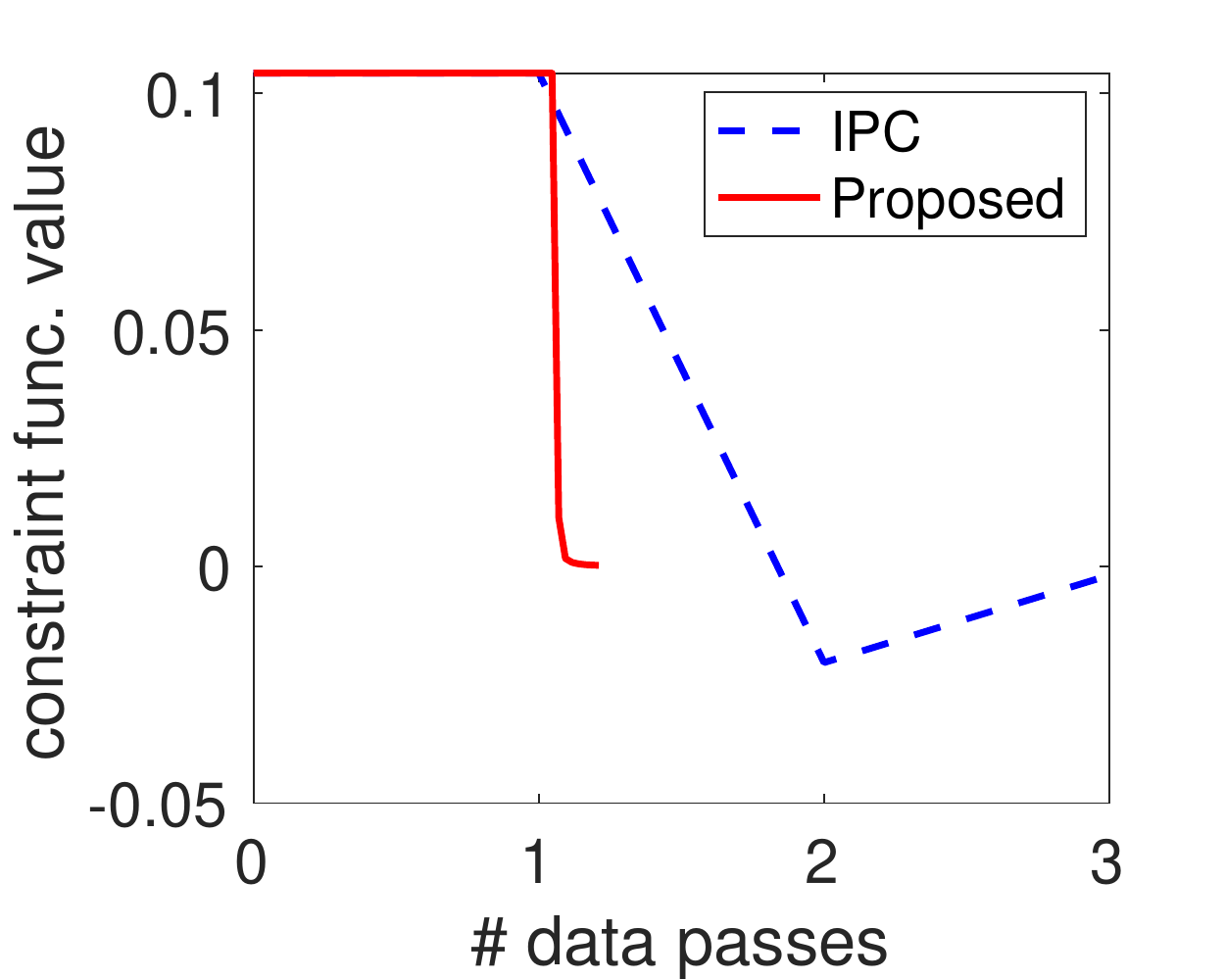} & 
			\includegraphics[width=0.35\textwidth]{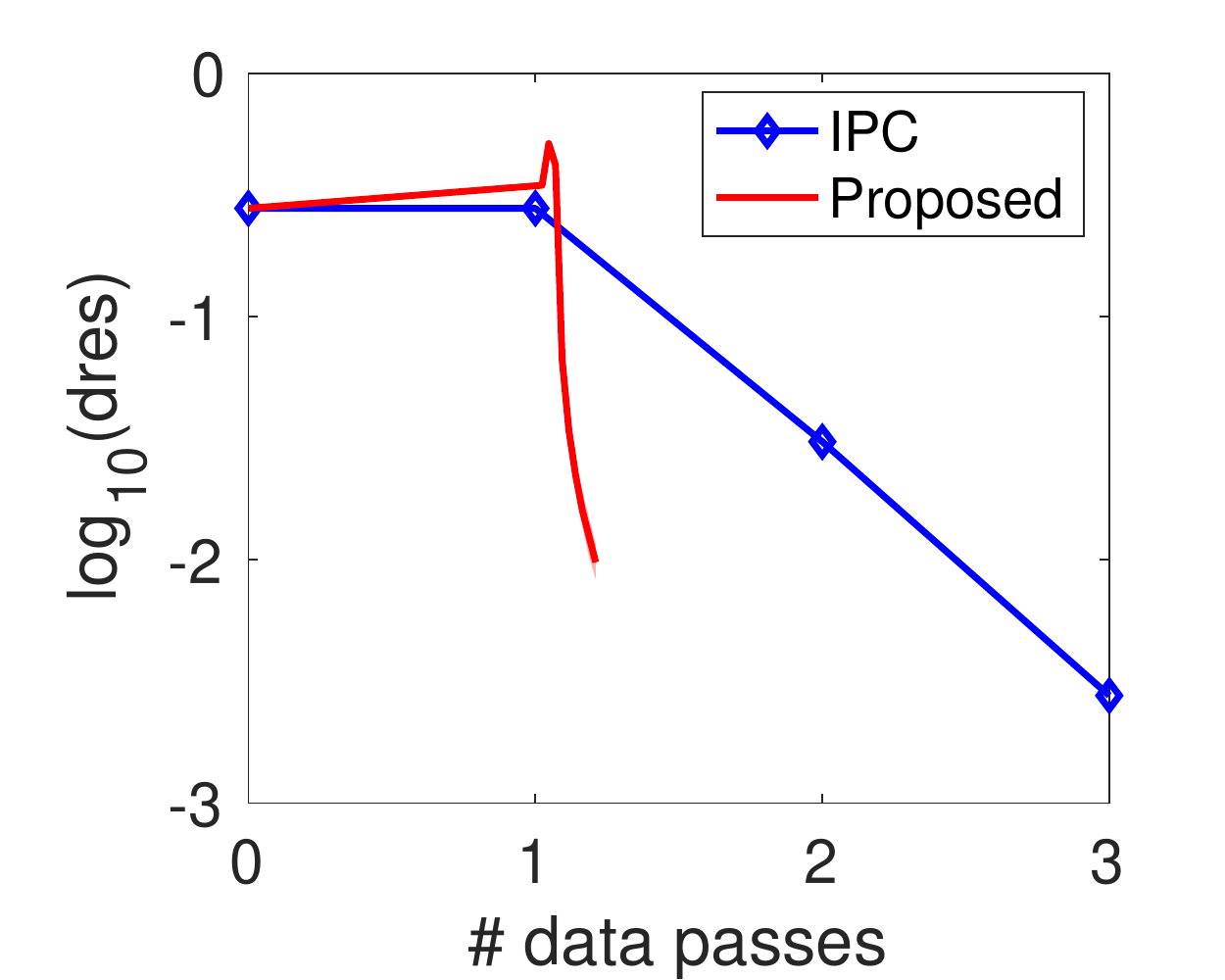}
		\end{tabular}
	\end{center}
	\vspace{-0.2cm}
		\caption{Mean curves and standard deviation (by shadow area$^\dagger$) of 10 different trials by the proposed Stoc-iALM and the curves by the IPC in \cite{ma2019proximally} on solving the fairness constrained problem \eqref{eq:fair} with \emph{bank}, \emph{a9a}, and \emph{loan} data sets (from top to bottom).\\[0.2cm]
$^\dagger$ Some shadow areas are invisible because the deviations are too small.}\label{fig:fair_10seeds}
\end{figure}

\subsection{Nonconvex Neyman-Pearson Classification}
In this subsection, we test our proposed Stoc-iALM (Algorithm \ref{alg:ialm-stoc}) with the PStorm (Algorithm \ref{alg:pstorm}) subroutine on solving the nonconvex Neyman-Pearson classification problem \cite{yan2022adaptive, rigollet2011neyman}. The problem aims at minimizing the false-negative error subject to a constraint on the level of false-positive error. It can be formulated as
\begin{equation}\label{eq:np-clas}
	\begin{aligned}
	\min_{\vx \in \RR^d} & \ f_0(\vx) := \frac{1}{n^+} \sum_{i=1}^{n^+} \phi(\vx^{\top}\va_i^+) , \\ 
	\st & \ f_1(\vx) := \frac{1}{n^-} \sum_{i=1}^{n^-} \phi(-\vx^{\top}\va_i^-) - \hat{c} \le 0,
	\end{aligned}
\end{equation}
	where $\{\va_i^+\}_{i=1}^{n^+}$ and $\{\va_i^-\}_{i=1}^{n^-}$ denotes the positive-class samples and negative-class samples of the training data set. The parameter $\hat{c}$ controls the level of the false-positive error. 
	In \eqref{eq:np-clas}, we set $\phi(\cdot)$ to the sigmoid function: $\phi(u) = 1/(1+\exp(u))$. 
	We use three data sets: \emph{spambase} \cite{Dua:2019} with $d = 57$ and $(n^+,n^-) = (1813, 2788)$, \emph{madelon} \cite{guyon2004result} with $d = 500$ and $(n^+,n^-) = (1300, 1300)$, and \emph{gisette} \cite{guyon2004result} with $d = 2000$ and $(n^+,n^-) = (3500, 3500)$. 
To make sure the feasibility of the problem, we set the false-positive error parameter to $\hat{c} = 0.2$ for \emph{spambase} and \emph{gisette}, and $\hat{c} = 0.4$ for \emph{madelon}. Following \cite{yan2022adaptive}, before feeding each data set into the solvers, we preprocess them by first normalizing it feature-wisely to have mean $0$ and variance $1$, and then scaling each sample to have unit $2$-norm.

Similar to Section \ref{subsec:num_fair}, we reformulate the inequality constraint in \eqref{eq:np-clas} to an equality constraint $f_1(\vx)+\vv = 0$ for our method Stoc-iALM, where $\vv \ge 0$ is enforced. The compared IPC method in \cite{ma2019proximally} is applied directly to \eqref{eq:np-clas}. 
The tolerance is again set to $\vareps = 10^{-2}$ in all tests. Both methods are terminated if the violation of primal and dual feasibility 
is below $\vareps$, where the dual variable of the IPC is computed by \eqref{eq:opt-dual} as in Section \ref{subsec:num_fair}. For Stoc-iALM, at the $k$-th outer iteration, we set $\beta_k = 2^k$ and the smoothness constant to $\frac{\beta_k + 1}{2}$, and again we set $\tilde\vc(\vx^{k+1}) = \vc(\vx^{k+1})$ in the $\vy$-update \eqref{eq:alm-y-stoc}. In the PStorm subroutine, we set the mini-batch size to $10$ for \emph{spambase}, and $30$ for \emph{madelon} and \emph{gisette}. 
The parameter settings of the IPC exactly follow from the code of \cite{ma2019proximally} provided by its authors. Again, we record the primal and dual residuals after every 50 inner iterations in Stoc-iALM, and after every data pass in IPC. Both methods start from a zero vector for each data set, and we perform 10 independent trials by using different random seeds for the proposed method.

Table~\ref{table:np_10seeds} gives \verb|pres| and \verb|dres| at the produced $\vareps$-KKT point and \verb|#data| by each method to produce such a point. Figure~\ref{fig:np_10seeds} plots the curves of the constraint function value and \verb|dres|. Again, we see that our proposed method Stoc-iALM needs significantly fewer data passes to produce a KKT point with the same-level error tolerance.

\begin{table}\caption{The violation of primal feasibility and the violation to the dual feasibility at the produced $\vareps$-KKT point with $\vareps=10^{-2}$, and the number of data passes (shortened by pres, dres and \#data$^\dagger$ respectively) of 10 trials with different random seeds by the proposed Stoc-iALM and the IPC in \cite{ma2019proximally} on solving the Neyman-Pearson classification problem \eqref{eq:np-clas} with \emph{spambase}, \emph{madelon}, and \emph{gisette} data sets (from left to right).\\[0.2cm]
$^\dagger$ \#data by Stoc-iALM are fractional because the subroutine PStorm uses minibatch of data points to compute sample gradients and we record pres and dres after every 50 inner iterations.}\label{table:np_10seeds} 
\begin{center}
\resizebox{1 \textwidth}{!}{ 
\begin{tabular}{|c||c|c|c|} 
\hline
method [\emph{spambase}] & pres & dres & \#data \\\hline\hline 
Stoc-iALM (1) & 0 & 7.8e-3 & 18.75  \\ 
Stoc-iALM (2) & 0 & 6.1e-3 & 39.23  \\ 
Stoc-iALM (3) & 0 & 8.5e-3 & 13.74  \\ 
Stoc-iALM (4) & 0 & 7.6e-3 & 16.93  \\ 
Stoc-iALM (5) & 0 & 9.0e-3 & 11.01  \\ 
Stoc-iALM (6) & 0 & 6.5e-3 & 21.48  \\ 
Stoc-iALM (7) & 0 & 6.8e-3 & 28.76  \\ 
Stoc-iALM (8) & 0 & 9.9e-3 & 22.85  \\ 
Stoc-iALM (9) & 0 & 9.3e-3 & 11.47  \\ 
Stoc-iALM (10) & 0 & 7.1e-3 & 16.47  \\ 
\hline
IPC & 0 & 9.7e-3 & 37  \\\hline 
\end{tabular}

\begin{tabular}{|c||c|c|c|} 
\hline
method [\emph{madelon}] & pres & dres & \#data \\\hline\hline 
Stoc-iALM (1) & 0 & 1.00e-2 & 336.08  \\ 
Stoc-iALM (2) & 0 & 1.00e-2  & 324.77  \\ 
Stoc-iALM (3) & 0 & 1.00e-2 & 330.08  \\ 
Stoc-iALM (4) & 0 & 1.00e-2  & 314.15  \\ 
Stoc-iALM (5) & 0 & 1.00e-2  & 323.15  \\ 
Stoc-iALM (6) & 0 & 1.00e-2 & 322.92  \\ 
Stoc-iALM (7) & 0 & 1.00e-2  & 317.85  \\ 
Stoc-iALM (8) & 0 & 1.00e-2  & 332.38  \\ 
Stoc-iALM (9) & 0 & 1.00e-2  & 326.38  \\ 
Stoc-iALM (10) & 0 & 1.00e-2  & 341.15  \\ 
\hline
IPC & 0 & 1.00e-2  & 1804  \\\hline 
\end{tabular}

\begin{tabular}{|c||c|c|c|} 
\hline
method [\emph{gisette}] & pres & dres & \#data \\\hline\hline 
Stoc-iALM (1) & 0 & 1.00e-2 & 234.04  \\ 
Stoc-iALM (2) & 0 & 1.00e-2 & 235.24  \\ 
Stoc-iALM (3) & 0 & 1.00e-2 & 234.04  \\ 
Stoc-iALM (4) & 0 & 1.00e-2 & 235.24  \\ 
Stoc-iALM (5) & 0 & 1.00e-2 & 234.04  \\ 
Stoc-iALM (6) & 0 & 1.00e-2 & 235.24  \\ 
Stoc-iALM (7) & 0 & 1.00e-2 & 235.24  \\ 
Stoc-iALM (8) & 0 & 1.00e-2 & 235.24  \\ 
Stoc-iALM (9) & 0 & 1.00e-2 & 234.04  \\ 
Stoc-iALM (10) & 0 & 1.00e-2 & 234.04  \\ 
\hline
IPC & 0 & 1.00e-2 & 650  \\\hline 
\end{tabular}
}
\end{center}
\end{table}

\begin{figure}[t] 
	\begin{center}
		\begin{tabular}{cc}
\multicolumn{2}{c}{{\Large\emph{spambase} dataset}}  \\
    \includegraphics[width=0.35\textwidth]{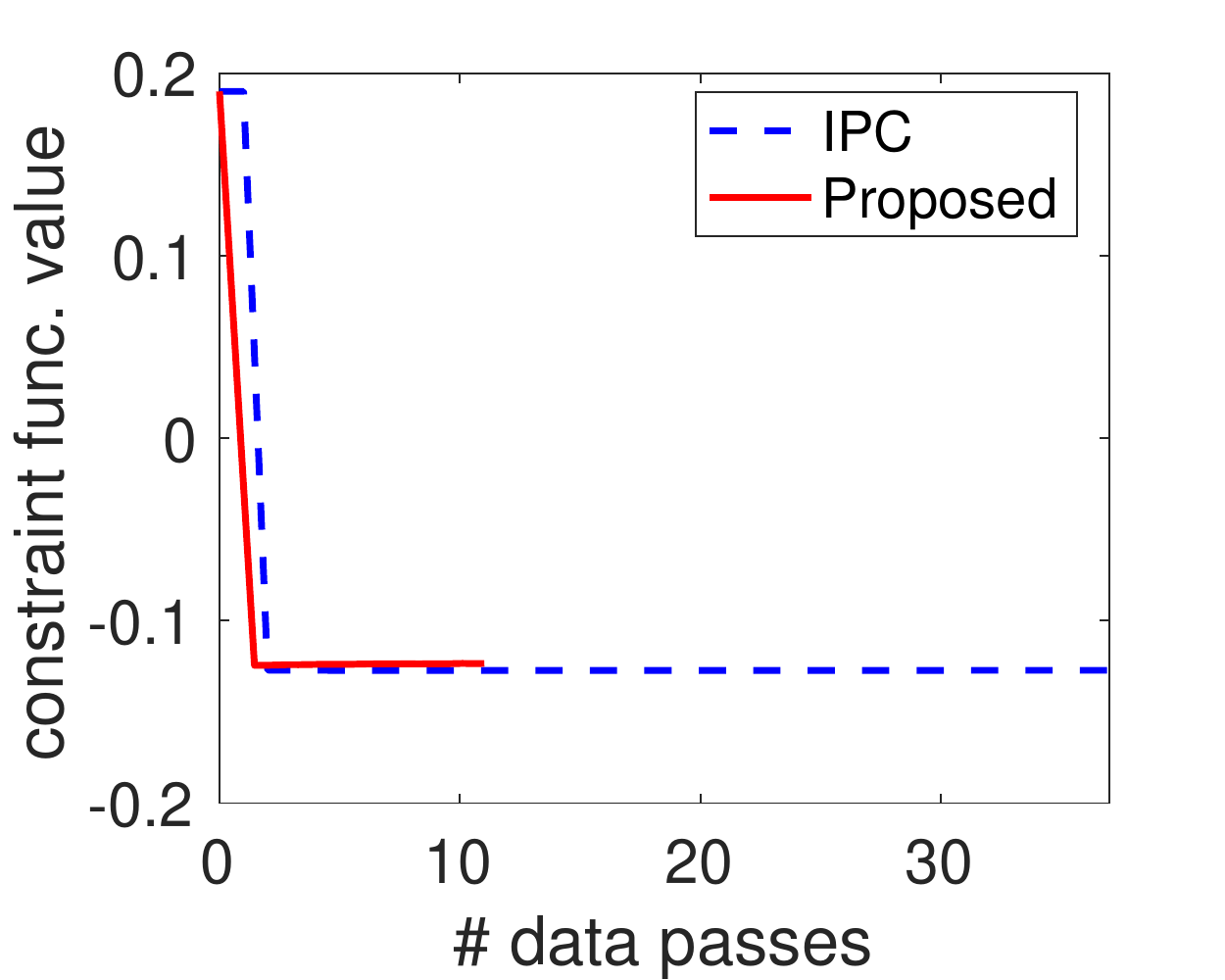} &
			\includegraphics[width=0.35\textwidth]{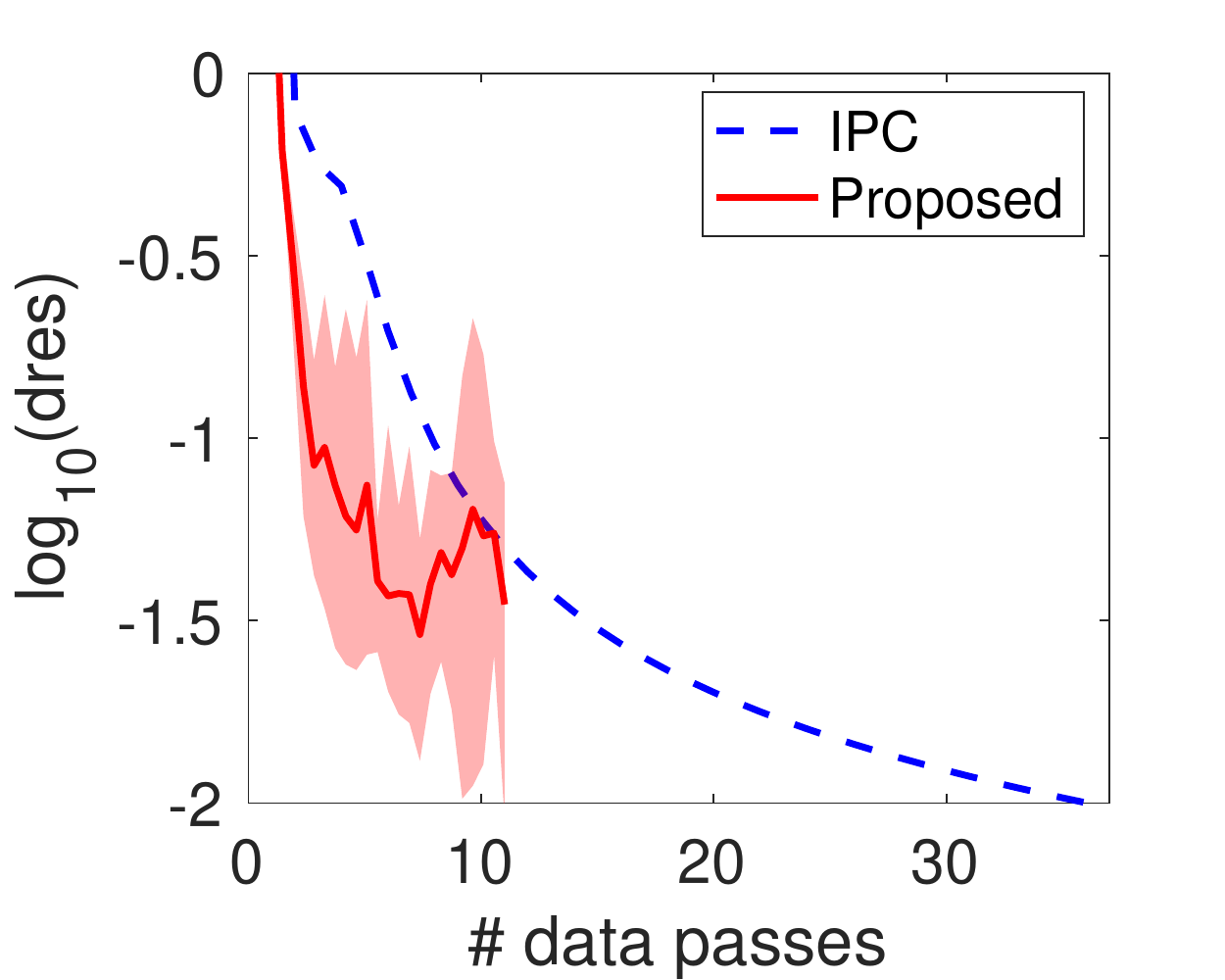} \\[0.4cm]
			\multicolumn{2}{c}{{\Large\emph{madelon} dataset}}\\
	\includegraphics[width=0.35\textwidth]{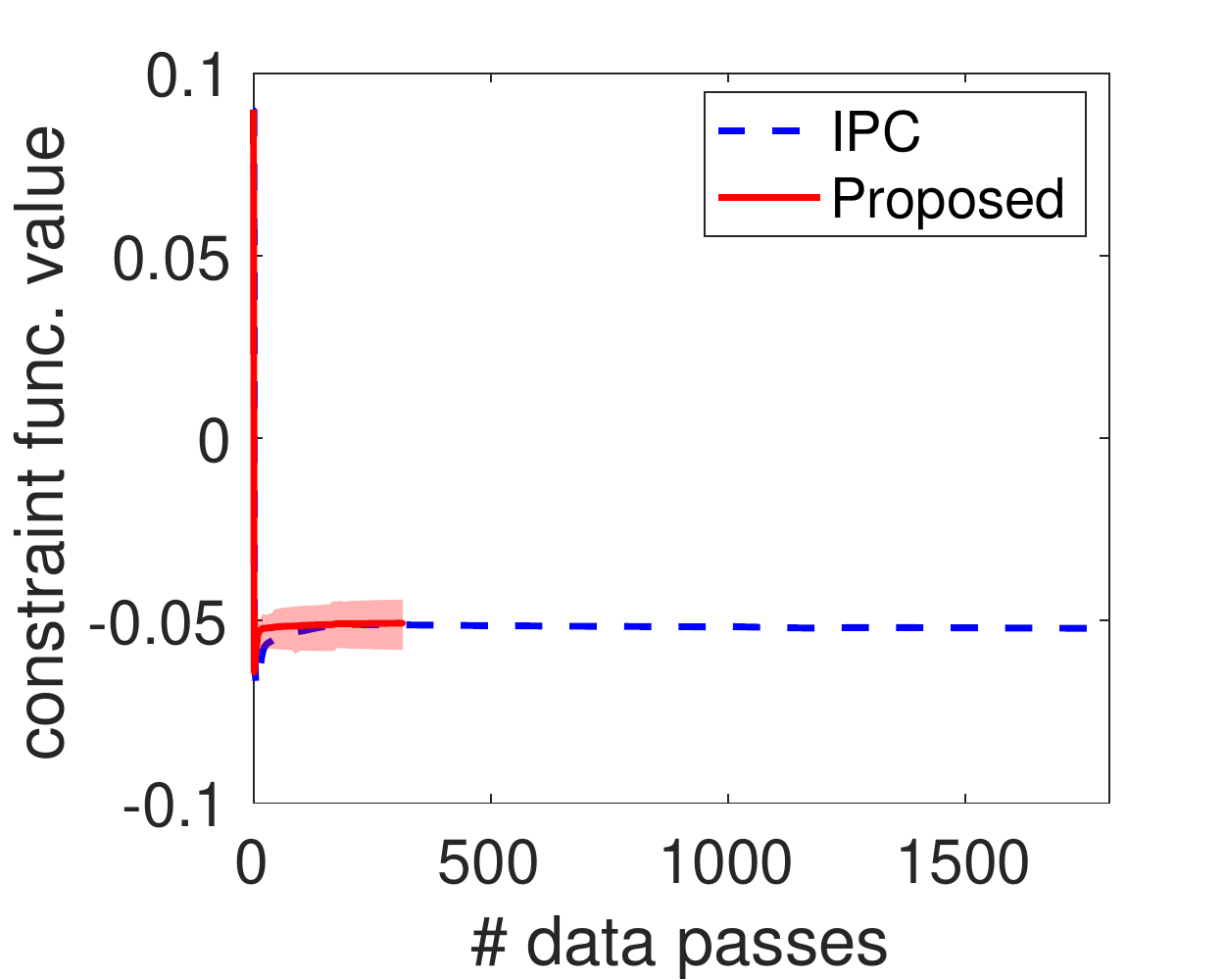} & 
			\includegraphics[width=0.35\textwidth]{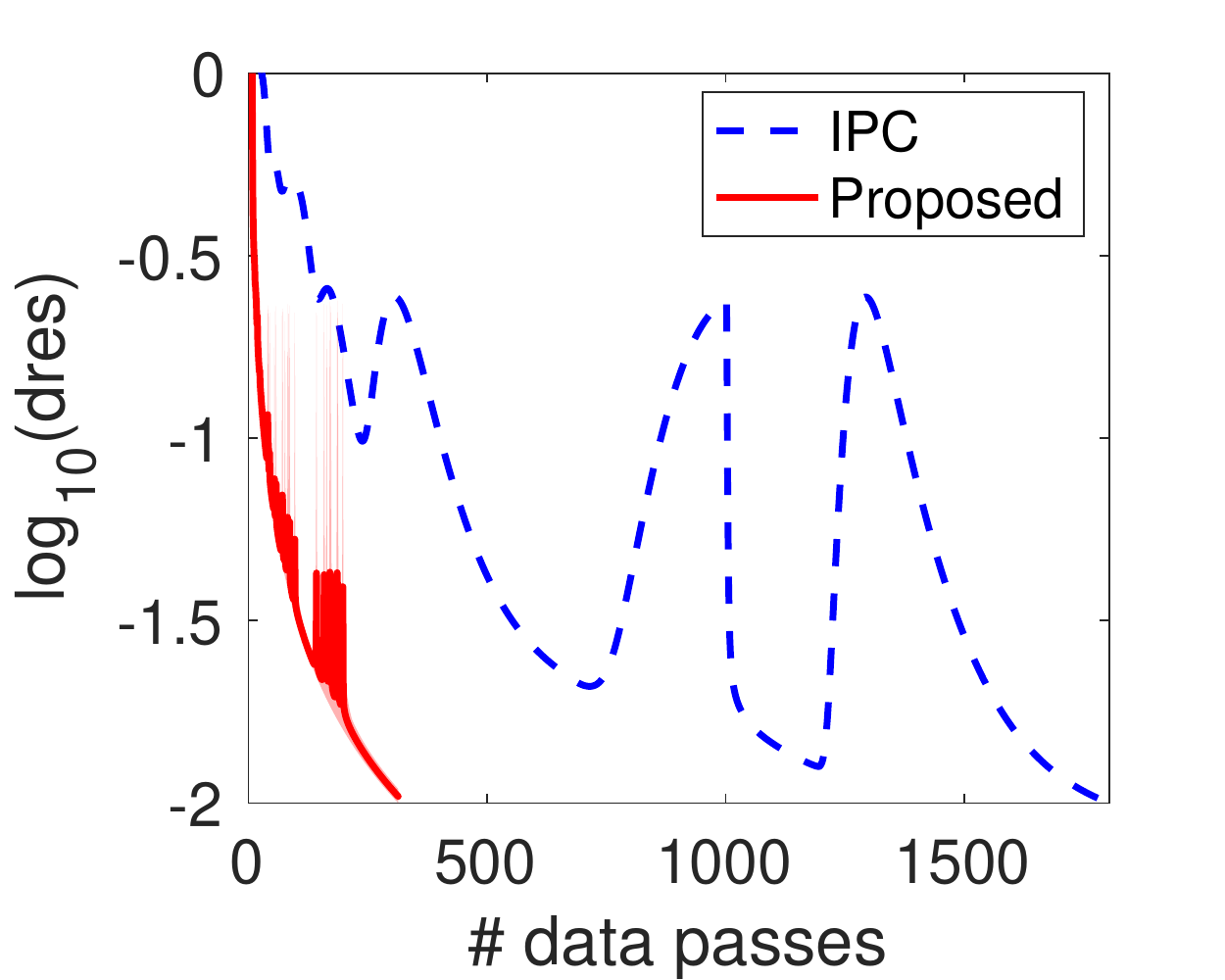} \\[0.4cm]
			\multicolumn{2}{c}{{\Large\emph{gisette} dataset}}\\
	\includegraphics[width=0.35\textwidth]{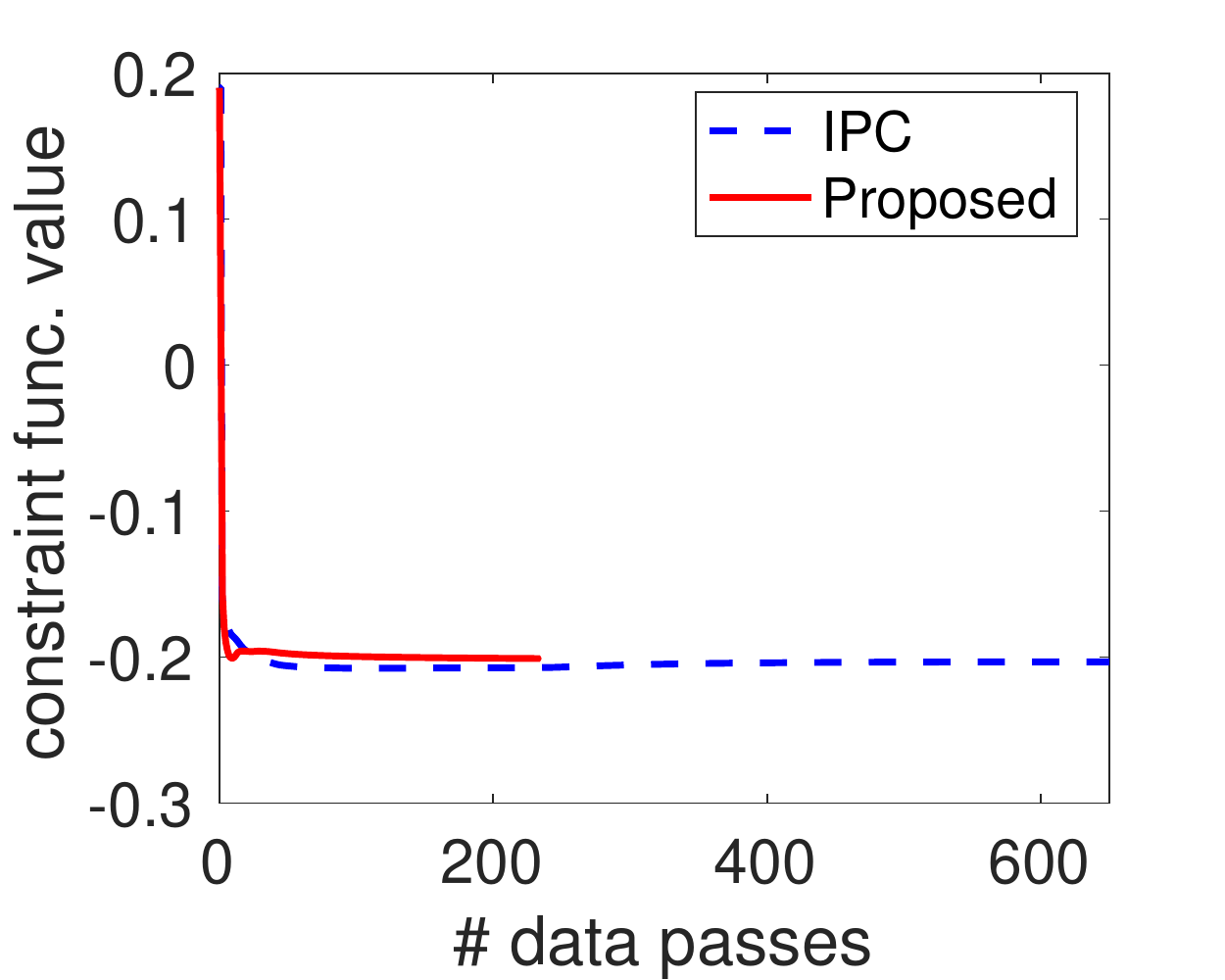} & 
			\includegraphics[width=0.35\textwidth]{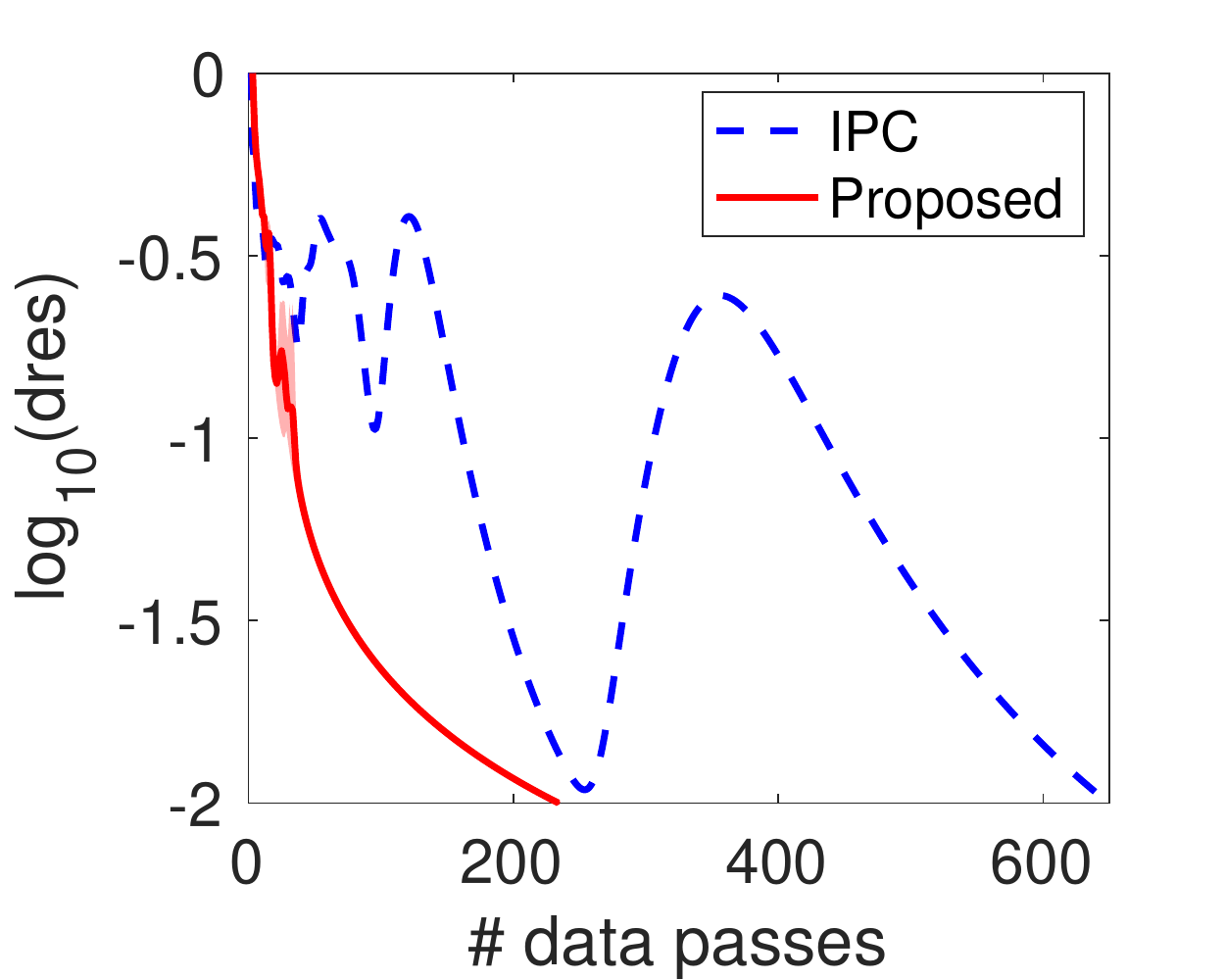}
		\end{tabular}
	\end{center}
	\vspace{-0.2cm}
		\caption{Mean curves and standard deviation (by shadow area$^\dagger$) of 10 different trials by the proposed Stoc-iALM and the curves by the IPC in \cite{ma2019proximally} on solving the Neyman-Pearson classification problem \eqref{eq:np-clas} with \emph{spambase}, \emph{madelon}, and \emph{gisette} data sets (from top to bottom).\\[0.2cm]
		$^\dagger$ Some shadow areas are invisible because the deviations are too small.}\label{fig:np_10seeds}
\end{figure}

\section{Conclusion}
We have presented a stochastic inexact augmented Lagrangian method (Stoc-iALM) for solving nonconvex expectation constrained optimization.  
To handle nonconvex stochastic iALM subproblems, we apply a momentum-based variance-reduced proximal stochastic gradient method (PStorm) subroutine with a proposed post-processing step. 
To reach an $\vareps$-KKT solution in expectation, we establish an oracle complexity of $O(\vareps^{-5})$, which improves over the state-of-the-art complexity of $O(\vareps^{-6})$. 
Numerically, we have demonstrated that the proposed Stoc-iALM can significantly outperform one state-of-the-art method. 

\section*{Acknowledgements} This work is partly supported by NSF grants DMS-2053493 and DMS-2208394 and the ONR award N00014-22-1-2573, and also by the Rensselaer-IBM AI Research Collaboration, part of the IBM AI Horizons Network.	

	
\bibliographystyle{abbrv} 

\end{document}